\DeclareFontFamily{OT1}{pzc}{} 
\DeclareFontShape{OT1}{pzc}{m}{it}{<-> s * [1.100] pzcmi7t}{}
\DeclareMathAlphabet{\mathpzc}{OT1}{pzc}{m}{it}
\newcommand{\hh}{h} 
\newcommand{\h}[1]{ \hh \left( #1 \right) }
\newcommand{\JJ}{J}
\newcommand{\J}[2]{ \JJ _{#1} \left( #2 \right)  }
\newcommand{\Jd}[2]{ \JJ _{#1}^{\delta} \left( #2 \right)  }
\newcommand{\U}{\mathpzc{ U } }
\renewcommand{\H}{\mathpzc{ H } }
\renewcommand{\L}{ \ensuremath{ L^2 \left( \Omega \right) } }
\newcommand{\bv}{ \ensuremath{ BV \left( \Omega \right) } }
\newcommand{\pp}[1]{ \varPhi \left( #1 \right) }
\newcommand{\M}[2]{\mathcal{M}_{#1} \left( #2 \right) }
\newcommand{\lin}[2]{\mathscr{L} \left( #1 , #2 \right) }
\newcommand{\F}{F}
\newcommand{\FF}{\ensuremath{ F } }
\newcommand{\dF}{F'}
\newcommand{\Fd}{F^*}
\renewcommand{\a}{\alpha}
\newcommand{\w}{\omega}
\newcommand{\g}{\gamma}
\newcommand{\R}{\mathbb{R}}
\newcommand{\N}{\mathbb{N}}
\newcommand{\range}[1]{\mathscr{R}(#1)}
\newcommand{\dom}[1]{\mathscr{D}(#1)}
\newcommand{\domf}{\ensuremath{dom} \ \hh } 
\newcommand{\ordem}[1]{\mathcal{O} \left( #1 \right) }
\newcommand{\argmin}{\ensuremath{argmin}}
\newcommand{\tq}{\ensuremath{ \ \mid \ }}
\renewcommand{\S}[1]{ \mathcal{S} \left( #1 \right) }
\newcommand{\ub}{\ensuremath{\overline{u}}}
\newcommand{\ua}{u_{\alpha}}
\newcommand{\uad}{u_{\alpha}^{\delta}}
\newcommand{\yd}{y^{\delta}}
\newcommand{\sg}[1]{\partial \hh \left( #1 \right) }
\newcommand{\db}[3]{D_{\hh}^{#1} \left( #2 , #3 \right) }
\newcommand{\bola}[2]{\mathcal{B}_{#1} \left( #2 \right) }
\newcommand{\inner}[2]{ \left\langle #1 \mbox{ , } #2 \right\rangle}
\newcommand{\hinner}[2]{ \left\langle #1 \mbox{ , } #2 \right\rangle _{\H} }
\newcommand{\norma}[1]{\left\Vert #1 \right\Vert}
\newcommand{\norm}[1]{\left\Vert #1 \right\Vert ^{2}}
\newcommand{\unorm}[1]{\left\Vert #1 \right\Vert ^{2}_{\U}}
\newcommand{\lnorma}[1]{\left\Vert #1 \right\Vert_{\L}}
\newcommand{\lnorm}[1]{\left\Vert #1 \right\Vert ^{2}_{\L}}
\newcommand{\snorma}[1]{ \lvert #1 \rvert_{\bv}}
\newcommand{\dhnorma}[1]{\left\Vert #1 \right\Vert_{\H^*}}
\newcommand{\dhnorm}[1]{\left\Vert #1 \right\Vert ^{2}_{\H^*}}
\newtheorem{theorem}{Theorem}[section]
\newtheorem{lemma}[theorem]{Lemma}
\newtheorem{proposition}[theorem]{Proposition}
\newtheorem{corollary}[theorem]{Corollary}
\newtheorem{remark}[theorem]{Remark}
\newtheorem{definition}[theorem]{Definition}
\newtheorem{assumption}[theorem]{Assumption}
\begin{document}
\setcounter{footnote}{1}

\title{On Tikhonov functionals penalized by Bregman distances}

\author{
I.R.~Bleyer%
\thanks{Department of Mathematics, Federal University of St. Catarina,
P.O. Box 476, 88040-900 Florian\'opolis, Brazil
(\href{mailto:ismaelbleyer@gmail.com}{\tt ismaelbleyer@gmail.com},
 \href{mailto:acgleitao@gmail.com}{\tt acgleitao@gmail.com}).}
\ \ and \
A.~Leit\~ao$^\dag$}

\date{\small\today}

\maketitle

\begin{small}
\begin{abstract}
We investigate Tikhonov regularization methods for nonlinear ill-posed
problems in Banach spaces, where the penalty term is described by Bregman
distances. We prove convergence and stability results. Moreover, using
appropriate source conditions, we are able to derive rates of convergence
in terms of Bregman distances. 
We also analyze an iterated Tikhonov method for nonlinear problems, where
the penalization is given by an appropriate convex functional.
\end{abstract}

\noindent {\bf Keywords:} Tikhonov functionals, Bregman distances,
Total variation regularization.
\end{small}

\section{Introduction} \label{sec:intro}

In this paper we study non-quadratic regularization methods for solving
ill-posed operator equations of the form
\begin{equation}\label{eq:princ}
\F(u)=y \ ,
\end{equation}
where $\F : \dom{\F} \subset \U \rightarrow \H$ is an operator between infinite
dimensional Banach spaces. Both linear and nonlinear problems are considered.

Tikhonov method is widely used to approximate solutions of inverse problems
modeled by operator equations in Hilbert spaces \cite{Tikhonov1963,
Groetsch1984}. In this article we investigate a Tikhonov methods, which
consist of the minimization of functionals of the type
\begin{equation}\label{eq:regula}
\Jd{\a}{u} = \dfrac{1}{2} \| \F(u) - \yd \| + \a \h{u} \ ,
\end{equation}
where $\a \in \R_{+}$ is called regularization parameter, $\h{\cdot}$ is a
proper convex functional, and the noisy data $\yd$ satisfy
\begin{equation}\label{eq:ruido} 
\| y - \yd \|2 < \delta \ .
\end{equation}

The method presented above represents a generalization of the classical
Tikhonov regularization. Therefore, the following questions arise:
\begin{itemize}
\item For $\a > 0$, does the solution \eqref{eq:regula} exist? Does the
solution depends continuously on the data $\yd$?
\item Is the method convergent? (i.e., if the data $y$ is exact and $\a \to 0$,
do the minimizers of \eqref{eq:regula} converge to a solution of
\eqref{eq:princ}?)
\item Is the method stable in the sense that: if $\a = \a(\delta)$ is chosen
appropriately, do the minimizers of \eqref{eq:regula} converge to a solution
of \eqref{eq:princ} as $\delta \to 0$?
\item What is the rate of convergence? How should the parameter
$\a = \a(\delta)$ be chosen in order to get optimal convergence rates?
\end{itemize}

The first point above is answered in \cite{Hofmann2007}. Throughout this
article we assume the following assumptions.

\begin{assumption}\label{assump:1}
\renewcommand{\labelenumi}{(A\arabic{enumi})}
\begin{enumerate} \
\item Given the Banach spaces $\U$ and $\H$ one associates the topologies
$\tau_{\U}$ and $\tau_{\H}$, respectively, which are weaker than the norm
topologies;
\item The topological duals of $\U$ and $\H$ are denoted by ${\U}^*$ and
$\H$, respectively;
\item The norm $\norma{\cdot}$ is sequentially lower semi-continuous with
respect to $\tau_{\H}$, i.e., for $u_k \rightarrow u$ with respect to the
$\tau_{\U}$ topology, $\h{u} \leq \liminf_{k} \h{u_k}$;
\item $\dom{F}$ has non-empty interior with respect to the norm topology
and is $\tau_{\U}$-closed. Moreover, $\dom{F} \cap \domf \neq \emptyset$;
\item $F : \dom{F} \subseteq \U \rightarrow \H$ is continuous from
$\left( \U , \tau_{\U} \right) $ to $\left( \H , \tau_{\H} \right)$;
\item The functional $\hh : [0,+\infty] \rightarrow \H$ is proper, convex,
bounded from below and $\tau_{\U}$ lower semi-continuous;
\item For every $M > 0 \mbox{ , } \a > 0$, the sets
$$
\M{\a}{M} = \left\lbrace  u \in \U \tq \Jd{\a}{u} \leq M \right\rbrace 
$$
are $\tau_{\U}$ compact, i.e. every sequence $(u_k)$ in $\M{\a}{M}$ has a
subsequence, which is convergent in $\U$ with respect to the $\tau_{\U}$
topology.
\end{enumerate}
\end{assumption}

The goal of this paper is to answer the last three questions posed above.
We obtain convergence rates and error estimates with respect to the generalized
\textit{Bregman distances}, originally introduced in \cite{Bregman1967}. Even
though this tool does not satisfy symmetry requirement nor the triangular
inequality, it is the main ingredient to this work.

This paper is organized as follow: In section \ref{sec:linear} we consider
the linear case and give quantitative estimates for the minimizers of
\eqref{eq:regula}, for exact and for noisy data. In section \ref{sec:nonlinear}
contains similar results as the section \ref{sec:linear} for nonlinear
problems. In section \ref{sec:iterative} we briefly discuss a iterative
method for the nonlinear case, the main results contains convergence analysis.

\section{Convergence analysis for linear problems} \label{sec:linear}

In this we consider only the linear case. Equation \eqref{eq:princ} will be
denoted by $\F u = y$, and the operator is defined from a Banach space to
a Hilbert space. The main results of this section were proposed originally
in \cite{Burger2004, resmerita2005}.

\subsection{Rates of convergence for source condition of type I}

Error estimates for the solution error can be obtained only under additional
smoothness assumption on the data, the so called \textit{source conditions}.
At a first moment we assume that $y \in \range{\FF}$ and let $\ub$ be an
$\hh$-minimizing solution by definition \ref{def_sol_min}. We assume that
there exist at least one element $\xi$ in $\sg{\ub}$ that belongs to the
range of adjoint of the operator $\FF$. Note that $\range{\Fd} \subseteq \U^*$
and $\sg{\ub} \subseteq \U^*$. Summarizing, we have
\begin{equation}\label{source:l1}
\xi \in \range{\Fd} \cap \sg{\ub} \neq \varnothing \, ,
\end{equation}
where $\ub$ is such that
\begin{equation}\label{source:l1eq}
\F \ub = y \ .
\end{equation}

We can rewrite the source condition \eqref{source:l1} as following: there
exist an element $\w \in \H$ such that $\xi = \Fd \w$. Note that under this
assumption we can define the dual pairing for $\psi, u \in \U^* \times \U$,
where $\psi \in \range{\Fd}$ as
\begin{equation*}
\inner{\psi}{u} = \inner{\Fd \nu}{u} := \hinner{\nu}{\F u} \ ,
\end{equation*}for some $\nu \in \H$.

\begin{theorem}[Stability]\label{theo:ls1}
Let \eqref{eq:ruido} hold and let $\ub$ be an $\hh$-minimizing solution of
\eqref{eq:princ} such that the source condition (\ref{source:l1}) and
\eqref{source:l1eq} are satisfied. Then, for each minimizer $\uad$ of
\eqref{eq:regula} the estimate
\begin{equation}\label{est_theo:ls1}
\db{\Fd \w}{\uad}{\ub} \leq \dfrac{1}{2\a} \left( \a \norma{\w}
   + \delta \right)^2
\end{equation}
holds for $\a > 0$. In particular, if $\a \sim \delta$, then
$\db{\Fd \w}{\uad}{\ub} = \ordem{\delta}$.
\end{theorem}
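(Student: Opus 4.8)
The plan is to exploit the minimizing property of $\uad$ for the functional $\Jd{\a}{\cdot}$, comparing $\uad$ against the exact solution $\ub$, and then to absorb the cross term using the source condition $\xi = \Fd\w \in \sg{\ub}$. First I would write down the inequality $\Jd{\a}{\uad} \leq \Jd{\a}{\ub}$, i.e.
\begin{equation*}
\tfrac12 \norma{\F\uad - \yd} + \a\,\h{\uad} \leq \tfrac12 \norma{\F\ub - \yd} + \a\,\h{\ub} = \tfrac12 \norma{y - \yd} + \a\,\h{\ub} \leq \tfrac12 \delta^2 + \a\,\h{\ub},
\end{equation*}
using $\F\ub = y$ from \eqref{source:l1eq} and the noise bound \eqref{eq:ruido}. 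Rearranging gives $\tfrac12\norma{\F\uad - \yd} + \a\big(\h{\uad} - \h{\ub}\big) \leq \tfrac12\delta^2$.

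Next I would bring in the Bregman distance. By definition, $\db{\Fd\w}{\uad}{\ub} = \h{\uad} - \h{\ub} - \inner{\Fd\w}{\uad - \ub}$, so $\h{\uad} - \h{\ub} = \db{\Fd\w}{\uad}{\ub} + \inner{\Fd\w}{\uad - \ub}$. Using the dual-pairing identity $\inner{\Fd\w}{\uad - \ub} = \hinner{\w}{\F\uad - \F\ub} = \hinner{\w}{\F\uad - y}$ and substituting into the rearranged inequality yields
\begin{equation*}
\tfrac12\norma{\F\uad - \yd} + \a\,\db{\Fd\w}{\uad}{\ub} \leq \tfrac12\delta^2 - \a\,\hinner{\w}{\F\uad - y}.
\end{equation*}
Then I would estimate the right-hand side: write $\F\uad - y = (\F\uad - \yd) + (\yd - y)$, apply Cauchy–Schwarz to get $|\hinner{\w}{\F\uad - y}| \leq \norma{\w}\big(\norma{\F\uad - \yd}^{1/2}_{\phantom{x}} + \delta\big)$ — here I must be careful about the paper's notational convention, in which $\norma{\cdot}$ without a subscript on $\F\uad-\yd$ apparently denotes the squared norm, so I would track $t := \norma{\F\uad-\yd}$ as the squared residual and write $|\hinner{\w}{\F\uad-y}| \le \norma{\w}(\sqrt t + \delta)$.

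The final step is a one-variable estimate: from $\tfrac12 t + \a\,\db{\Fd\w}{\uad}{\ub} \le \tfrac12\delta^2 + \a\norma{\w}(\sqrt t + \delta)$, I would complete the square in $\sqrt t$, namely $\tfrac12 t - \a\norma{\w}\sqrt t = \tfrac12(\sqrt t - \a\norma{\w})^2 - \tfrac12\a^2\norma{\w}^2 \geq -\tfrac12\a^2\norma{\w}^2$, to drop the residual term and obtain $\a\,\db{\Fd\w}{\uad}{\ub} \le \tfrac12\delta^2 + \a\norma{\w}\delta + \tfrac12\a^2\norma{\w}^2 = \tfrac12(\delta + \a\norma{\w})^2$. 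Dividing by $\a$ gives \eqref{est_theo:ls1}, and the choice $\a\sim\delta$ makes the bound $\ordem{\delta}$. The only mildly delicate point is bookkeeping the squared-norm convention consistently through the Young/completing-the-square step; everything else is a direct chain of inequalities, so I do not anticipate a genuine obstacle — the existence of $\uad$ is guaranteed by Assumption \ref{assump:1} together with \cite{Hofmann2007}, and all functionals involved are well defined since $\ub \in \dom F \cap \domf$.
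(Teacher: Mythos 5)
Your argument is correct and takes essentially the same route as the paper's proof: minimality of $\uad$ compared against $\ub$, the Bregman decomposition with $\xi = F^*\w$, the source condition to move the dual pairing into $\H$, Cauchy--Schwarz for the noise term, and completing a square to discard the residual. The only cosmetic difference is that you apply Cauchy--Schwarz to the cross term first and complete the square in the scalar $\sqrt{t}$, whereas the paper keeps the vector form and drops the nonnegative term $\tfrac12\norm{F\uad - \yd + \a\w}$; the resulting estimate is identical.
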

\begin{proof}
We note that $\norm{\F \ub - \yd} \leq \delta^2$, by \eqref{source:l1eq} and
\eqref{eq:ruido}. Since $\uad$ is a minimizer of the regularized problem
\eqref{eq:regula}, we have
$$
\dfrac{1}{2} \norm{ \F \uad - \yd } + \a \h{\uad} \ \leq \
\dfrac{\delta^2}{2} + \a \h{\ub} \, .
$$
Let $\db{\Fd \w}{\uad}{\ub}$ the Bregman distance between $\uad$ and $\ub$,
so the above inequality becomes
$$
\dfrac{1}{2} \norm{ \F \uad - \yd } + \a \left(  \db{\Fd \w}{\uad}{\ub} +
\inner{\Fd \w}{\uad-\ub}  \right) \ \leq \ \dfrac{\delta^2}{2} \, .
$$
Hence, using \eqref{eq:ruido} and Cauchy-Schwarz inequality we can derive the
estimate
$$
\dfrac{1}{2} \norm{ \F \uad - \yd } + \hinner{ \a \w}{\F \uad - \yd } +
\a \db{\Fd \w}{\uad}{\ub} \ \leq \ \dfrac{\delta^2}{2} +
\a \norma{\w} \delta \, .
$$
Using the the equality $\norm{a+b} = \norm{a} + 2\inner{a}{b} + \norm{b}$, it
is easy to see that
$$
\dfrac{1}{2} \norm{ \F \uad - \yd + \a \w} + \a  \db{\Fd \w}{\uad}{\ub}
\ \leq \ \dfrac{\a^2 }{2} \norm{\w } + \a \delta \norma{\w} +
\dfrac{\delta^2}{2} \ ,
$$
which yields \eqref{est_theo:ls1} for $\a > 0$.
\end{proof}

\begin{theorem}[Convergence]\label{theo:lc1}
If $\ub$ is an $\hh$-minimizing solution of \eqref{eq:princ} such that the
source condition (\ref{source:l1}) and \eqref{source:l1eq} are satisfied,
then for each minimizer $\ua$ of \eqref{eq:regula} with exact data, the
estimate
$$
\db{\Fd \w}{\ua}{\ub} \leq \dfrac{\a}{2}\norm{\w}
$$
holds true.
\end{theorem}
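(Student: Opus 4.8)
The plan is to mirror the proof of Theorem~\ref{theo:ls1}, specialized to the exact-data situation $\yd = y$ and $\delta = 0$. First, since $\ua$ minimizes the Tikhonov functional \eqref{eq:regula} with exact data, I would compare its value at $\ua$ with its value at $\ub$; using $\F\ub = y$ from \eqref{source:l1eq} together with minimality this gives
$$
\dfrac{1}{2} \norm{\F\ua - y} + \a \h{\ua} \ \leq \ \a \h{\ub} \, .
$$

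Next I would bring in the Bregman distance. Since the source condition \eqref{source:l1} supplies $\xi = \Fd\w \in \sg{\ub}$, the definition of $\db{\Fd\w}{\ua}{\ub}$ yields $\h{\ua} - \h{\ub} = \db{\Fd\w}{\ua}{\ub} + \inner{\Fd\w}{\ua - \ub}$, so the inequality above becomes
$$
\dfrac{1}{2} \norm{\F\ua - y} + \a\, \db{\Fd\w}{\ua}{\ub} + \a \inner{\Fd\w}{\ua-\ub} \ \leq \ 0 \, .
$$
I would then rewrite the dual pairing using the convention fixed before Theorem~\ref{theo:ls1}, namely $\inner{\Fd\w}{\ua-\ub} = \hinner{\w}{\F\ua - \F\ub} = \hinner{\w}{\F\ua - y}$, and complete the square via $\norm{a+b} = \norm{a} + 2\inner{a}{b} + \norm{b}$ with $a = \F\ua - y$ and $b = \a\w$. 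This leads to
$$
\dfrac{1}{2} \norm{\F\ua - y + \a\w} + \a\, \db{\Fd\w}{\ua}{\ub} \ \leq \ \dfrac{\a^2}{2} \norm{\w} \, .
$$

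Finally, dropping the nonnegative term $\tfrac{1}{2}\norm{\F\ua - y + \a\w}$ and dividing by $\a > 0$ delivers the asserted estimate $\db{\Fd\w}{\ua}{\ub} \leq \tfrac{\a}{2}\norm{\w}$. I do not anticipate a genuine obstacle: existence of a minimizer $\ua$ is guaranteed by \cite{Hofmann2007}, and relative to the stability proof the only change is that every term carrying $\delta$ simply drops out; the single point requiring care is the interpretation of the pairing $\inner{\Fd\w}{\,\cdot\,}$ through $\F$, which is precisely the convention introduced for the source condition.
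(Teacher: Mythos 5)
Your proposal is correct and follows exactly the route the paper intends: the paper's proof of this theorem is simply the stability argument of Theorem~\ref{theo:ls1} specialized to exact data ($\delta = 0$), which is precisely what you carry out, including the Bregman decomposition with $\xi = \Fd\w$, the rewriting of the pairing through $\F$, and the completion of the square before dropping the nonnegative residual term.
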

\begin{proof}
The proof is analogous to the proof of theorem \ref{theo:ls1}, taking
$\delta = 0$.
\end{proof}

\subsection{Rates of convergence for source condition of type II}

In this section we use a source condition, which is stronger than the one
used in previous subsection. This condition corresponds the existence of some
element $\xi \in \sg{\ub} \subset \U^*$ in the range of the operator $\Fd \F$,
i.e.
\begin{equation}\label{source:l2}
\xi \in \range{\Fd \F} \cap \sg{\ub} \neq \varnothing \, ,
\end{equation}
where $\ub$ is such that
\begin{equation}\label{source:l2eq}
\Fd \F \ub = \Fd y \, .
\end{equation}

Note that in \eqref{source:l2eq} we do not require $y \in \range{\F}$.
Moreover, the definition \ref{def_sol_min} is given in context of
least-squares solution.
The condition \eqref{source:l2} is equivalent to the existence of
$\w \in \U \backslash \left\lbrace 0 \right\rbrace$ such that
$\xi = \Fd \F \w $, where $\Fd$ is the adjoint operator of $\F$ and
$\Fd \F : \U \rightarrow \U^*$.

\begin{theorem}[Stability]\label{theo:ls2}
Let \eqref{eq:ruido} hold and let $\ub$ be an $\hh$-minimizing solution of
\eqref{eq:princ} such that the source condition \eqref{source:l2} as well
as \eqref{source:l2eq} are satisfied. Then the following inequalities hold
for any $\a > 0$:
\begin{equation}\label{temp2.11}
\db{\Fd \F\w}{\uad}{\ub} \leq \db{\Fd \F\w}{\ub - \a \w}{\ub}
+ \dfrac{\delta^2}{\a} + \dfrac{\delta}{\a} \sqrt{\delta^2
+ 2 \a \db{\Fd \F\w}{\ub - \a \w}{\ub} } ,
\end{equation}
\begin{equation}\label{temp2.11a}
\norma{ \F \uad -  \F \ub } \leq \a \norma{ \F \w } + \delta
+ \sqrt{\delta^2 + 2 \a \db{\Fd \F\w}{\ub - \a \w}{\ub} } \, .
\end{equation}
\end{theorem}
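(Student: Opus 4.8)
The plan is to mimic the structure of the proof of Theorem \ref{theo:ls1}, but since the source element now factors through $\Fd\F$ rather than $\Fd$ alone, the natural comparison point is no longer $\ub$ itself but the shifted element $\ub-\a\w$. First I would start from the minimizing property of $\uad$, comparing $\Jd{\a}{\uad}$ against $\Jd{\a}{\ub-\a\w}$ (which is legitimate by (A4) for $\a$ small, and the desired inequalities are trivial otherwise since the right-hand sides are then large — a point worth a remark). This gives
$$
\tfrac12\norm{\F\uad-\yd}+\a\h{\uad}\ \leq\ \tfrac12\norm{\F(\ub-\a\w)-\yd}+\a\h{\ub-\a\w}\,.
$$

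Next I would introduce the Bregman distance with subgradient $\xi=\Fd\F\w\in\sg{\ub}$ on both sides: write $\h{\uad}=\db{\Fd\F\w}{\uad}{\ub}+\h{\ub}+\inner{\Fd\F\w}{\uad-\ub}$ and likewise for $\ub-\a\w$. The key algebraic trick is that the cross terms $\inner{\Fd\F\w}{\,\cdot\,-\ub}$ become inner products in $\H$: $\inner{\Fd\F\w}{v-\ub}=\hinner{\F\w}{\F v-\F\ub}$. Setting $r\coloneqq \F\uad-\F\ub$, $s\coloneqq-\a\F\w$ (so $\F(\ub-\a\w)-\F\ub=s$), and $e\coloneqq \F\ub-\yd$ with $\norma{e}\le\delta$, everything collapses to a quadratic inequality in $r$: after cancelling $\a\h{\ub}$ and using $\norm{r+e}=\norm{r}+2\inner{r}{e}+\norm{e}$ on the left and the analogous identity on the right, I expect to arrive at something of the form
$$
\tfrac12\norm{r}+\a\db{\Fd\F\w}{\uad}{\ub}\ \leq\ \a\,\db{\Fd\F\w}{\ub-\a\w}{\ub}+\delta\norma{r}+\tfrac12\delta^2\,,
$$
possibly after discarding a nonnegative term and bounding $\abs{\inner{r}{e}}\le\delta\norma{r}$ by Cauchy–Schwarz. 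Completing the square in $\norma{r}$, i.e. writing $\tfrac12\norm{r}-\delta\norma{r}=\tfrac12(\norma{r}-\delta)^2-\tfrac12\delta^2$, then yields
$$
(\norma{r}-\delta)^2+2\a\db{\Fd\F\w}{\uad}{\ub}\ \leq\ 2\delta^2+2\a\,\db{\Fd\F\w}{\ub-\a\w}{\ub}\,.
$$

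From this single inequality both claims follow. Dropping the Bregman term on the left gives $(\norma{r}-\delta)^2\le 2\delta^2+2\a\,\db{\Fd\F\w}{\ub-\a\w}{\ub}$; but I actually expect the sharper form $(\norma{r}-\delta)^2\le \delta^2+2\a D+\big(\a\norma{\F\w}\big)^2$ or similar once the $s$-terms are tracked carefully, so that taking square roots and using $\sqrt{a+b}\le\sqrt a+\sqrt b$ reproduces \eqref{temp2.11a}: $\norma{r}\le\a\norma{\F\w}+\delta+\sqrt{\delta^2+2\a\db{\Fd\F\w}{\ub-\a\w}{\ub}}$. For \eqref{temp2.11}, dropping instead the $\norm{r}$ term and isolating $\a\db{\Fd\F\w}{\uad}{\ub}$ gives a bound $\db{\Fd\F\w}{\uad}{\ub}\le \db{\Fd\F\w}{\ub-\a\w}{\ub}+\delta^2/\a+(\delta/\a)\norma{r}$, into which I substitute the just-proved bound on $\norma{r}$ — the $\a\norma{\F\w}$ piece needs to be re-expressed, presumably via $\db{\Fd\F\w}{\ub-\a\w}{\ub}$ which contains exactly a $\tfrac12\a^2\norm{\F\w}$-type contribution, to land on the stated right-hand side $\db{\Fd\F\w}{\ub-\a\w}{\ub}+\delta^2/\a+(\delta/\a)\sqrt{\delta^2+2\a\db{\Fd\F\w}{\ub-\a\w}{\ub}}$.

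The main obstacle I anticipate is bookkeeping rather than conceptual: correctly handling the shift $\ub-\a\w$ so that the residual $\F(\ub-\a\w)-\yd$ is expanded as $r' = s+e$ with $s=-\a\F\w$, and making sure the cross-term from the \emph{right}-hand side's $\norm{\,\cdot\,}$ expansion combines with the Bregman cross-term to reproduce exactly $\db{\Fd\F\w}{\ub-\a\w}{\ub}$ and not some other quantity — this is where the definition of the Bregman distance as $\db{\xi}{v}{\ub}=\h{v}-\h{\ub}-\inner{\xi}{v-\ub}$ and the identity $\inner{\Fd\F\w}{v-\ub}=\hinner{\F\w}{\F v-\F\ub}$ must be used in tandem. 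A secondary subtlety is the case distinction noted above: the comparison $\uad$ vs.\ $\ub-\a\w$ requires $\ub-\a\w\in\dom\F$, true by (A4) once $\a$ is small; for larger $\a$ one observes the asserted inequalities hold vacuously, so no generality is lost.
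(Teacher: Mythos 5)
Your overall strategy --- compare $\uad$ with the shifted element $\ub-\a\w$, expand both $\hh$-terms as Bregman distances with $\xi=F^*F\w$, convert the dual pairings into inner products in $\H$, and finish with a quadratic inequality --- is the paper's route, but two steps in your bookkeeping are genuinely wrong as written. First, you set $e=F\ub-\yd$ and assert $\norma{e}\le\delta$. The theorem only assumes \eqref{source:l2eq}, i.e.\ $F^*F\ub=F^*y$; the paper stresses that $y\in\range{F}$ is \emph{not} required, so $F\ub-y$ need not vanish and $\norma{F\ub-\yd}\le\delta$ is simply not available. The observation you are missing (and the one the paper uses) is that $F^*(F\ub-y)=0$, so the component $F\ub-y$ of $e$ is orthogonal to everything in $\range{F}$; the squared terms $\norm{e}$ cancel between the two sides of the minimality inequality, and in the surviving cross terms against elements of $\range{F}$ one may replace $F\ub$ by $y$, after which Cauchy--Schwarz with $\norma{y-\yd}\le\delta$ applies. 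Without this step your argument only covers the special case $F\ub=y$.

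Second, your quadratic is in the wrong variable. Completing the square in $\norma{r}$ with $r=F\uad-F\ub$ gives (even granting the intermediate inequality you wrote down) $\norma{r}\le\delta+\sqrt{2\delta^2+2\a D}$ with $D=\db{F^*F\w}{\ub-\a\w}{\ub}$, which is not \eqref{temp2.11a}; and your proposed repair for \eqref{temp2.11}, extracting a ``$\tfrac12\a^2\norm{F\w}$-type contribution'' from $D$, is not legitimate, since $D$ is a Bregman distance of $\hh$ and bears no general relation to $\norm{F\w}$. The paper instead works with $\gamma=\norma{F(\uad-\ub+\a\w)}$: with the comparison element $u=\ub-\a\w$ the corresponding quadratic term on the right-hand side vanishes identically, leaving $\tfrac12\gamma^2+\a\,\db{F^*F\w}{\uad}{\ub}\le\delta\gamma+\a D$. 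Estimating each nonnegative left-hand term separately by the right-hand side gives $\gamma\le\delta+\sqrt{\delta^2+2\a D}$, whence \eqref{temp2.11a} follows from $\norma{F\uad-F\ub}\le\gamma+\a\norma{F\w}$, and \eqref{temp2.11} follows by substituting the bound on $\gamma$ into $\a\,\db{F^*F\w}{\uad}{\ub}\le\delta\gamma+\a D$; no $\norma{F\w}$ term ever needs to be absorbed into $D$.
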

\begin{proof}
Since $\uad$ is a minimizer of \eqref{eq:regula}, it follows from algebraic
manipulation and from the definition of Bregman distance that
\begin{eqnarray}\label{temp1}
0 & \geq & \dfrac{1}{2}
  \left[ \norm{ \F \uad - \yd } - \norm{ \F u - \yd } \right]
  + \a \h{\uad} - \a \h{u} \nonumber \\
  &  =   & \dfrac{1}{2} \left[ \norm{ \F \uad }  - \norm{ \F u } \right]
  - \hinner{ \F \left( \uad - u \right)} {\yd} - \a \db{\Fd \F\w}{u}{\ub}
  \nonumber \\
  &      & + \ \a \hinner{ \F\w}{\F \left( \uad - u \right) }
  + \a \db{\Fd \F\w}{\uad}{\ub} \, .
\end{eqnarray}
Notice that
\begin{eqnarray*}
\norm{ \F \uad }  - \norm{ \F u }
& = & \norm{\F \left( \uad - \ub + \a \w \right) }
      - \norm{\F \left( u - \ub + \a \w \right) } \\
&   & + \ 2 \hinner{\F \uad - \F u}{\F \ub - \a \F \w} \, .
\end{eqnarray*}
Moreover, by \eqref{source:l2eq}, we have $\hinner{\F \left( \uad - u \right)}
{\yd - \F \ub } = \hinner{ \F \left( \uad - u \right) } { \yd - y }$.
Therefore, it follows from \eqref{temp1} that
\begin{eqnarray*}
&      & \dfrac{1}{2} \norm{\F \left( \uad - \ub + \a \w \right) }
         + \a \db{\Fd \F\w}{\uad}{\ub} \\
& \leq & \hinner{ \F \left( \uad - u \right) } { \yd - y }
         + \a \db{\Fd \F\w}{u}{\ub} + \dfrac{1}{2}
         \norm{\F \left( u - \ub + \a \w \right) }
\end{eqnarray*}
for every $u \in \U$, $\a \geq 0$ and $\delta \geq 0$.

Replacing $u$ by $\ub - \a \w$ in the last inequality, using \eqref{eq:ruido},
relations $\inner{a}{b} \leq | \inner{a}{b} | \leq \norma{a} \norma{b}$,
and defining $\gamma = \norma{\F \left( \uad - \ub + \a \w \right) }$ we obtain
$$
\dfrac{1}{2} \gamma^2 + \a \db{\Fd \F\w}{\uad}{\ub} \ \leq \
  \delta \gamma + \a \db{\Fd \F\w}{\ub - \a \w}{\ub} \, .
$$
We estimate separately each term on the left hand side by right hand side.
One of the estimates is an inequality in the form of a polynomial of the
second degree for $\gamma$, which gives us the inequality
$$
\gamma \leq \delta +
\sqrt{\delta^2 + 2 \a \db{\Fd \F\w}{\ub - \a \w}{\ub} } \, .
$$
This inequality together with the other estimate, gives us \eqref{temp2.11}.
Now, \eqref{temp2.11a} follows from the fact that
$\norma{\F \left( \uad - \ub \right) } \leq \gamma + \a \norma{ \F \w}$.
\end{proof}

\begin{theorem}[Convergence]\label{theo:lc2}
Let $\alpha \ge 0$ be given. If $\ub$ is a $\hh$-minimizing solution of
\eqref{eq:princ} satisfying the source condition \eqref{source:l2} as well
as \eqref{source:l2eq}, then the following inequalities hold true:
$$
\db{\Fd \F\w}{\ua}{\ub} \leq \db{\Fd \F\w}{\ub - \a \w}{\ub} \, ,
$$
$$
\norma{ \F \ua - \F \ub } \leq \a \norma{ F\w } +
  \sqrt{2 \a \db{\Fd \F\w}{\ub - \a \w}{\ub} } \, .
$$
\end{theorem}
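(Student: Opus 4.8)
The plan is to mimic the stability proof (Theorem \ref{theo:ls2}) with $\delta = 0$, which is exactly the strategy the authors used for Theorems \ref{theo:lc1} and \ref{theo:ls1}. First I would start from the fact that $\ua$ minimizes $\J{\a}{\cdot}$ with exact data $y$, so $\J{\a}{\ua} \le \J{\a}{u}$ for every admissible $u$; writing this out and inserting the definition of the Bregman distance $\db{\Fd\F\w}{\cdot}{\ub}$ with the subgradient $\xi = \Fd\F\w \in \sg{\ub}$ (which exists by the source condition \eqref{source:l2}), one obtains the analogue of the chain \eqref{temp1}. Then I would use the same two algebraic identities as in the stability proof: the expansion of $\norm{\F\ua} - \norm{\F u}$ in terms of $\norm{\F(\ua - \ub + \a\w)}$, $\norm{\F(u - \ub + \a\w)}$ and a cross term, together with the consequence of \eqref{source:l2eq} that replaces $\yd - \F\ub$ by $\yd - y$ — but now $\yd = y$, so that entire cross term simply vanishes, which is the simplification that $\delta = 0$ buys us.

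Second, after substituting $u = \ub - \a\w$ exactly as before, the intermediate inequality becomes
$$
\tfrac{1}{2}\gamma^2 + \a\,\db{\Fd\F\w}{\ua}{\ub} \ \le\ \a\,\db{\Fd\F\w}{\ub - \a\w}{\ub}\,,
$$
with $\gamma = \norma{\F(\ua - \ub + \a\w)}$ and no $\delta\gamma$ term on the right. Since $\tfrac12\gamma^2 \ge 0$, dropping it gives the first claimed inequality $\db{\Fd\F\w}{\ua}{\ub} \le \db{\Fd\F\w}{\ub - \a\w}{\ub}$ immediately. For the second claim, since $\a\,\db{\Fd\F\w}{\ua}{\ub} \ge 0$, keeping instead the $\gamma^2$ term yields $\gamma^2 \le 2\a\,\db{\Fd\F\w}{\ub - \a\w}{\ub}$, hence $\gamma \le \sqrt{2\a\,\db{\Fd\F\w}{\ub - \a\w}{\ub}}$. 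Finally, the triangle inequality $\norma{\F(\ua - \ub)} \le \gamma + \a\norma{\F\w}$ delivers the bound on $\norma{\F\ua - \F\ub}$.

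In short, I expect the proof to read "Repeat the proof of Theorem \ref{theo:ls2} with $\delta = 0$," and the only things to check carefully are that the case $\a = 0$ (allowed here by hypothesis $\a \ge 0$) causes no trouble — it makes both right-hand sides equal to $\db{\Fd\F\w}{\ub}{\ub} = 0$ and forces $\ua$ to be an $\hh$-minimizing solution with $\F\ua = \F\ub$, consistent with the inequalities — and that the polynomial/splitting argument degenerates gracefully when the right-hand side is zero. The main (very mild) obstacle is purely bookkeeping: making sure the vanishing of the $\hinner{\F(\ua - u)}{\yd - y}$ term and the nonnegativity discards are invoked in the right order so that both inequalities fall out of the single estimate above.
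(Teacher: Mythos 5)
Your proposal is correct and follows exactly the route the paper intends: the paper's proof of Theorem \ref{theo:lc2} is literally ``analogous to the proof of Theorem \ref{theo:ls2}, taking $\delta = 0$ (and noting $\a$ may be zero)'', and your worked-out version — the intermediate inequality $\tfrac12\gamma^2 + \a\,\db{\Fd\F\w}{\ua}{\ub} \le \a\,\db{\Fd\F\w}{\ub-\a\w}{\ub}$, splitting off each left-hand term, and the triangle inequality $\norma{\F(\ua-\ub)} \le \gamma + \a\norma{\F\w}$ — is precisely that argument with the $\delta\gamma$ term removed. No gaps; your remarks on the degenerate case $\a = 0$ match the paper's own comment.
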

\begin{proof}
The proof is analogous to the proof of theorem \ref{theo:ls2}, taking
$\delta = 0$. Notice that here $\a$ can be taken equal to zero.
\end{proof}

\begin{corollary}\label{cor:l}
Let the assumptions of the theorem \ref{theo:ls2} hold true. Further, assume
that $\hh$ is twice differentiable in a neighborhood $U$ of $\ub$ and there
there exists a number $M >0$ such that for any $v \in \U$ and $u \in U$ the
inequality
\begin{equation}\label{temp2.10}
\inner{\hh''(u)v}{v} \leq M \norm{v}
\end{equation}
hold true. Then, for the parameter choice $\a \sim \delta^{\frac{2}{3} }$ we have
$\db{\xi}{\uad}{\ub} = \ordem{ \delta^{\frac{4}{3} } }$. Moreover, for exact data
we have $\db{\xi}{\ua}{\ub} = \ordem{ \a^2 }$.
\end{corollary}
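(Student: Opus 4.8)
The plan is to feed the stability estimate \eqref{temp2.11} of Theorem \ref{theo:ls2} into the hypothesis \eqref{temp2.10}, using the second-order Taylor expansion of $\hh$ to bound the auxiliary Bregman term $\db{\Fd\F\w}{\ub-\a\w}{\ub}$. First I would observe that, since $\hh$ is twice differentiable in the neighborhood $U$ of $\ub$ and $\ub-\a\w$ lies in $U$ for $\a$ small enough, Taylor's theorem with integral remainder gives
$$
\db{\Fd\F\w}{\ub-\a\w}{\ub} = \hh(\ub-\a\w) - \hh(\ub) - \inner{\Fd\F\w}{(\ub-\a\w)-\ub}
= \tfrac12 \inner{\hh''(u_t)(\a\w)}{\a\w}
$$
for some $u_t$ on the segment between $\ub$ and $\ub-\a\w$, where I have used $\xi = \Fd\F\w \in \sg{\ub}$ so that $\hh'(\ub) = \Fd\F\w$. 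Applying \eqref{temp2.10} then yields the key bound
$$
\db{\Fd\F\w}{\ub-\a\w}{\ub} \leq \tfrac{M}{2}\,\a^2\,\norm{\w} =: C\a^2 ,
$$
with $C = \tfrac{M}{2}\norm{\w}$ independent of $\a$ and $\delta$.

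Next I would substitute this into \eqref{temp2.11}. The right-hand side becomes
$$
\db{\Fd\F\w}{\uad}{\ub} \leq C\a^2 + \frac{\delta^2}{\a} + \frac{\delta}{\a}\sqrt{\delta^2 + 2\a C\a^2}
= C\a^2 + \frac{\delta^2}{\a} + \frac{\delta}{\a}\sqrt{\delta^2 + 2C\a^3}.
$$
With the parameter choice $\a \sim \delta^{2/3}$ one checks term by term: $C\a^2 \sim \delta^{4/3}$; $\delta^2/\a \sim \delta^2/\delta^{2/3} = \delta^{4/3}$; and inside the square root $\delta^2 \sim \delta^2$ while $2C\a^3 \sim \delta^2$, so $\sqrt{\delta^2 + 2C\a^3} \sim \delta$, hence $(\delta/\a)\sqrt{\cdots} \sim \delta\cdot\delta/\delta^{2/3} = \delta^{4/3}$. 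All three contributions are $\ordem{\delta^{4/3}}$, giving $\db{\xi}{\uad}{\ub} = \ordem{\delta^{4/3}}$ as claimed (writing $\xi = \Fd\F\w$).

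For the exact-data statement I would repeat the argument with Theorem \ref{theo:lc2} in place of Theorem \ref{theo:ls2}: the convergence estimate reads $\db{\Fd\F\w}{\ua}{\ub} \leq \db{\Fd\F\w}{\ub-\a\w}{\ub} \leq C\a^2$, which is exactly $\ordem{\a^2}$. The only genuinely delicate point is the first step — justifying the second-order Taylor expansion and that $u_t \in U$ — which requires $\a$ small enough that the whole segment $[\ub-\a\w,\ub]$ stays inside the neighborhood $U$ where $\hh''$ is defined and \eqref{temp2.10} applies; for the asymptotic ($\delta\to 0$, resp. $\a\to 0$) statements this is automatic. The remaining algebra is the routine term-counting shown above, so the main obstacle is merely bookkeeping rather than any conceptual difficulty.
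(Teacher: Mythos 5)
Your proposal is correct and follows essentially the same route as the paper: a second-order Taylor expansion of $\hh$ at $\ub$ (with $\xi=\hh'(\ub)$ from the source condition) to bound $\db{\xi}{\ub-\a\w}{\ub}\leq \frac{M}{2}\a^2\unorm{\w}=\ordem{\a^2}$, then substitution into Theorems \ref{theo:ls2} and \ref{theo:lc2}. The explicit term-by-term bookkeeping for $\a\sim\delta^{2/3}$ and the remark about keeping the segment $[\ub-\a\w,\ub]$ inside $U$ simply spell out steps the paper leaves implicit.
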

\begin{proof}
Using Taylor's expansion at the point $\ub$ we obtain
$$
\h{u} \ = \ \h{\ub} + \inner{\hh'(\ub)}{u - \ub} +
\dfrac{1}{2} \inner{\hh''(\mu)(u - \ub)}{u - \ub}
$$
for some $\mu \in [ u , \ub ]$.
Let $u = \ub - \a \w$ in the above equality. For sufficiently small $\a$, it
follows from assumption \eqref{temp2.10} and the definition of the Bregman
distance, with $\xi = \hh'(\ub)$, that
\begin{eqnarray*}
\db{\xi}{\ub - \a \w}{\ub}
&  =   & \dfrac{1}{2} \inner{\hh''(\mu)(- \a \w )}{- \a \w } \nonumber \\
& \leq & \a^2 \dfrac{M}{2} \unorm{\w} \, .
\end{eqnarray*}
Note that $\db{\xi}{\ub - \a \w}{\ub} =  \ordem{\a^2} $, so the desired rates
of convergence follow from theorems \ref{theo:ls2} and \ref{theo:lc2}.
\end{proof}

\section{Convergence analysis for nonlinear problems} \label{sec:nonlinear}

\renewcommand{\F}[1]{F \left( #1 \right) }
\renewcommand{\dF}[1]{F' \left( #1 \right) }
\renewcommand{\Fd}[1]{F^* \left( #1 \right) }
\newcommand{\cfnld}{\ensuremath{\dF{\ub}^* \dF{\ub} \w}}
\renewcommand{\cfnld}{\xi}

This section points out the convergence analysis for the nonlinear problems.
We need to assume a nonlinear condition. In contrast with other classical
conditions, the following analysis covers the case when both $\U$ and $\H$
are Banach spaces.

\begin{assumption}\label{assump:2}
Assume that an $\hh$-minimizing solution $\ub$ of \eqref{eq:princ} exist and
that the operator $\FF : \dom{\FF} \subseteq \U \rightarrow \H$ is G\^ateaux
differentiable. Moreover, assume that there exists $\rho > 0$ such that, for
every $u \in \dom{F} \cap \bola{\rho}{\ub}$
\begin{equation}\label{cone}
\norma{ \F{u} - \F{\ub} - \dF{\ub} \left( u - \ub \right) } \leq
c \db{\xi}{u}{\ub} \ , \ c > 0 
\end{equation}
and
$\xi \in \sg{\ub}$.
\end{assumption}
This assumption was proposed originally in \cite{Resmerita2006}.

\subsection{Rates of convergence for source condition of type I}

For nonlinear operators we cannot define a adjoint operator. Therefore the
assumptions are done with respect to the linearization of the operator $\FF$.
In comparison with the source condition \eqref{source:l1} introduced on
previous section, we assume that
\begin{equation}\label{source:nl1}
\xi \in \range{ \dF{\ub}^*  } \cap \sg{\ub} \neq \varnothing
\end{equation}
where $\ub$ solves.
\begin{equation}\label{source:nl1eq}
\F{\ub} = y \, .
\end{equation}

The derivative of operator $\FF$ is defined between the Banach space $\U$
and $\lin{\U}{\H}$, the space of the linear transformations from $\U$ to $\H$.
When we apply the derivative at $\ub \in \U$ we have a linear operator
$\dF{\ub} : \U \rightarrow \H$ and so we can define its adjoint,
$\dF{\ub}^* : \H^* \rightarrow \U^*$.

The source condition (\ref{source:nl1}) is stated as follows: There exists an
element $\w \in \H^*$ such that
\begin{equation}\label{source:nl1a}
\xi = \dF{\ub}^* \w \in \sg{\ub} \, .
\end{equation}

\begin{theorem}[Stability]\label{theo:nls1}
Let the assumptions \ref{assump:1}, \ref{assump:2} and relation
\eqref{eq:ruido} hold true. Moreover, assume that there exist
$\w \in {\H}^*$ such that \eqref{source:nl1a} is satisfied and
$c \dhnorma{\w} < 1 $. Then, the following estimates hold:
$$
\norma{ \F{\uad} - \F{\ub} } \leq 2 \a \dhnorma{\w} +
  2 \left( \a^2 \unorm{\w} + \delta^{2} \right)^{ \frac{1}{2} } \, ,
$$
$$
\db{\dF{\ub}^* \w}{\uad}{\ub} \leq \dfrac{2}{1-c\dhnorma{\w}}
 \left[ \dfrac{\delta^2}{2\a} + \a\unorm{\w} +
 \dhnorma{\w}\left( \a^2\unorm{\w} + \delta^2 \right)^{ \frac{1}{2} } \right] \, .
$$
In particular, if $\alpha \sim \delta$, then $ \norma{ \F{\uad} - \F{\ub} } =
\ordem{\delta}$ and $\db{\dF{\ub}^* \w}{\uad}{\ub} = \ordem{\delta}$.
\end{theorem}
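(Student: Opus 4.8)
The plan is to mimic the structure of the linear stability result (Theorem~\ref{theo:ls1}), replacing the exact linearity of $\F$ by the quantitative linearization estimate \eqref{cone} from Assumption~\ref{assump:2}, and absorbing the resulting defect term by using the hypothesis $c\dhnorma{\w}<1$. First I would use that $\uad$ minimizes \eqref{eq:regula} to compare against $\ub$, obtaining
$$
\dfrac{1}{2}\norm{\F{\uad}-\yd} + \a\h{\uad} \ \leq \ \dfrac{1}{2}\norm{\F{\ub}-\yd} + \a\h{\ub} \ \leq \ \dfrac{\delta^2}{2} + \a\h{\ub},
$$
where the last step uses \eqref{source:nl1eq} and \eqref{eq:ruido}. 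Then I would rewrite $\a(\h{\uad}-\h{\ub})$ as $\a\,\db{\dF{\ub}^*\w}{\uad}{\ub} + \a\inner{\dF{\ub}^*\w}{\uad-\ub}$ via the definition of the Bregman distance with subgradient $\xi=\dF{\ub}^*\w$, and use the adjoint relation to turn $\inner{\dF{\ub}^*\w}{\uad-\ub}$ into $\hinner{\w}{\dF{\ub}(\uad-\ub)}$.

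The key new step, which I expect to be the main obstacle, is controlling $\hinner{\w}{\dF{\ub}(\uad-\ub)}$: unlike the linear case, $\dF{\ub}(\uad-\ub)$ is not $\F{\uad}-\F{\ub}$. I would write
$$
\dF{\ub}(\uad-\ub) = \bigl(\F{\uad}-\F{\ub}\bigr) - \bigl(\F{\uad}-\F{\ub}-\dF{\ub}(\uad-\ub)\bigr),
$$
estimate the first bracket by Cauchy--Schwarz against $\w$ and then by $\yd$ via $\F{\ub}=y$ and \eqref{eq:ruido}, and estimate the second bracket using \eqref{cone} by $c\,\db{\dF{\ub}^*\w}{\uad}{\ub}$. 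This produces a term $\a\, c\dhnorma{\w}\,\db{\dF{\ub}^*\w}{\uad}{\ub}$ on the unfavourable side, which can be moved to the left and absorbed because $1-c\dhnorma{\w}>0$; this is exactly where that hypothesis is consumed. (One technical point to check is that $\uad\in\bola{\rho}{\ub}$ so that \eqref{cone} applies; this follows for $\a$, $\delta$ small from boundedness of the minimizers, but I would flag it rather than belabour it.)

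Collecting terms, I would arrive at an inequality of the form
$$
\dfrac{1}{2}\norm{\F{\uad}-\yd} + \a\bigl(1-c\dhnorma{\w}\bigr)\db{\dF{\ub}^*\w}{\uad}{\ub} \ \leq \ \dfrac{\delta^2}{2} + \a\dhnorma{\w}\,\norma{\F{\uad}-\yd} + \a\delta\dhnorma{\w},
$$
and then complete the square in $\norma{\F{\uad}-\yd}$ exactly as in the proof of Theorem~\ref{theo:ls1} (using $\norm{a+b}=\norm{a}+2\inner{a}{b}+\norm{b}$ with $b=\a\w$). This yields a bound of the form $\norm{\F{\uad}-\yd} \lesssim \a^2\unorm{\w}+\delta^2$, from which the stated estimate on $\norma{\F{\uad}-\F{\ub}}$ follows via the triangle inequality $\norma{\F{\uad}-\F{\ub}}\leq\norma{\F{\uad}-\yd}+\delta$, and the stated estimate on the Bregman distance follows by solving the absorbed inequality for $\db{\dF{\ub}^*\w}{\uad}{\ub}$ and inserting the bound on $\norma{\F{\uad}-\yd}$. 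Finally, setting $\a\sim\delta$ gives both $\ordem{\delta}$ rates, proving the theorem.
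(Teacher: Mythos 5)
Your proposal is correct and essentially reproduces the paper's own argument: minimality against $\ub$, rewriting the penalty via the Bregman distance with $\xi=\dF{\ub}^{*}\w$, splitting $\dF{\ub}\left(\uad-\ub\right)$ into $\F{\uad}-\F{\ub}$ plus the linearization error controlled by \eqref{cone}, absorbing $\a c\dhnorma{\w}\,\db{\dF{\ub}^{*}\w}{\uad}{\ub}$ thanks to $c\dhnorma{\w}<1$, and closing with a quadratic inequality for the residual. The only differences are cosmetic: the paper takes $\gamma=\norma{\F{\uad}-\F{\ub}}$ as the quadratic variable (via $\tfrac12\norm{\F{\uad}-\F{\ub}}\leq\norm{\F{\uad}-\yd}+\delta^{2}$) whereas you work with $\norma{\F{\uad}-\yd}$ and finish with a triangle inequality, and since $\H$ may be a Banach space with $\w\in\H^{*}$ your ``completing the square with $b=\a\w$'' should be understood as solving the resulting scalar quadratic inequality rather than invoking the Hilbert-space identity.
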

\begin{proof}
Since $\uad$ is the minimizer of \eqref{eq:regula}, it follows from the
definition of the Bregman distance that
\begin{eqnarray*}
\dfrac{1}{2} \norm{ \F{\uad} - \yd } & \leq &
  \dfrac{1}{2} \delta ^2 - \a \left( \db{\dF{\ub}^* \w }{\uad}{\ub}
  + \inner{\dF{\ub}^* \w }{\uad - \ub} \right) .
\end{eqnarray*}
By using \eqref{eq:ruido} and \eqref{source:nl1eq} we obtain
\begin{eqnarray*}
\dfrac{1}{2} \norm{\F{\uad} - \F{\ub}} & \leq & \norm{ \F{\uad} -\yd }
+ \delta^2 \, .
\end{eqnarray*}

Now, using the last two inequalities above, the definition of Bregman distance,
the nonlinearity condition and the assumption $\left( c \dhnorma{\w} -
1 \right) < 0$, we obtain
\begin{eqnarray}
\dfrac{1}{4}\norm{\F{\uad}- \F{\ub}} & \leq &
  \dfrac{1}{2}\left( \norm{ \F{\uad} -\yd } + \delta^2 \right) \nonumber \\
  & \leq & \delta ^2 - \a \db{\dF{\ub}^* \w }{\uad}{\ub}
  + \a \inner{ \w }{- \dF{\ub} \left( \uad - \ub\right) } \nonumber \\
& \leq & \delta ^2 - \a \db{\dF{\ub}^* \w }{\uad}{\ub}
  + \a \dhnorma{\w }\norma{\F{\uad} - \F{\ub} } \nonumber \\
& & + \a \dhnorma{\w }\norma{\F{\uad} - \F{\ub} -
  \dF{\ub} \left( \uad - \ub \right) }  \nonumber \\
& = & \delta ^2 + \a \left( c \dhnorma{\w} -1 \right)
  \db{\dF{\ub}^* \w }{\uad}{\ub} \nonumber \\
& & + \ \a \dhnorma{\w}\norma{\F{\uad} - \F{\ub} }\label{temp2a} \\
& \leq &  \delta ^2 + \a \dhnorma{\w }\norma{\F{\uad} - \F{\ub} } \label{temp2}
\end{eqnarray}
From \eqref{temp2} we obtain an inequality in the form of a polynomial of
second degree) for the variable $\gamma = \norma{\F{\uad} - \F{\ub}}$. This
gives us the first estimate stated by the theorem. For the second estimate
we use \eqref{temp2a} and the previous estimate for $\gamma$.
\end{proof}

\begin{theorem}[Convergence]\label{theo:nlc1}
Let the assumptions \ref{assump:1} and \ref{assump:2} hold true. Moreover,
assume the existence of $\w \in {\H}^*$ such that \eqref{source:nl1a} is
satisfied and $ c \dhnorma{\w} < 1 $. Then, the following estimates hold:
$$
\norma{ \F{\ua} - \F{\ub} } \leq 4 \a \dhnorma{\w} \, ,
$$
$$
\db{\dF{\ub}^* \w}{\ua}{\ub} \leq \dfrac{4 \a \dhnorm{\w}}{1-c\dhnorma{\w}} \, .
$$
\end{theorem}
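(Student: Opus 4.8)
The plan is to mimic the proof of Theorem \ref{theo:nls1} with $\delta = 0$, exactly as the authors did in the linear case (Theorems \ref{theo:lc1} and \ref{theo:lc2}). Since $\ua$ is a minimizer of $\J{\a}{\cdot}$ with exact data, comparing its value at $\ua$ against its value at $\ub$ gives
\[
\tfrac{1}{2}\norm{\F{\ua} - y} + \a \h{\ua} \ \leq \ \a \h{\ub} \, ,
\]
because $\F{\ub} = y$ by \eqref{source:nl1eq}. Rewriting $\h{\ua} - \h{\ub}$ in terms of the Bregman distance $\db{\dF{\ub}^*\w}{\ua}{\ub}$ and the linear term $\inner{\dF{\ub}^*\w}{\ua - \ub}$, and then using the defining adjoint relation $\inner{\dF{\ub}^*\w}{\ua-\ub} = \inner{\w}{\dF{\ub}(\ua-\ub)}$, one arrives at
\[
\tfrac{1}{2}\norm{\F{\ua} - y} + \a\,\db{\dF{\ub}^*\w}{\ua}{\ub} \ \leq \ \a\inner{\w}{-\dF{\ub}(\ua-\ub)} \, .
\]

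Next I would split $-\dF{\ub}(\ua-\ub) = \bigl(\F{\ub} - \F{\ua}\bigr) + \bigl(\F{\ua} - \F{\ub} - \dF{\ub}(\ua-\ub)\bigr)$ and bound the pairing using $\inner{\w}{\cdot} \leq \dhnorma{\w}\norma{\cdot}$ together with the nonlinearity condition \eqref{cone}, $\norma{\F{\ua} - \F{\ub} - \dF{\ub}(\ua-\ub)} \leq c\,\db{\xi}{\ua}{\ub}$ with $\xi = \dF{\ub}^*\w$. This produces
\[
\tfrac{1}{2}\norm{\F{\ua} - \F{\ub}} + \a(1 - c\dhnorma{\w})\,\db{\dF{\ub}^*\w}{\ua}{\ub} \ \leq \ \a\dhnorma{\w}\norma{\F{\ua} - \F{\ub}} \, ,
\]
which is precisely \eqref{temp2a}--\eqref{temp2} with $\delta = 0$. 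Since $c\dhnorma{\w} < 1$, the Bregman term on the left is nonnegative, so discarding it leaves a quadratic inequality in $\gamma := \norma{\F{\ua} - \F{\ub}}$, namely $\tfrac14\gamma^2 \leq \a\dhnorma{\w}\gamma$, giving $\gamma \leq 4\a\dhnorma{\w}$, the first estimate. Feeding this back into the inequality and keeping the Bregman term yields
\[
\a(1 - c\dhnorma{\w})\,\db{\dF{\ub}^*\w}{\ua}{\ub} \ \leq \ \a\dhnorma{\w}\cdot 4\a\dhnorma{\w} \, ,
\]
hence $\db{\dF{\ub}^*\w}{\ua}{\ub} \leq \dfrac{4\a\dhnorm{\w}}{1 - c\dhnorma{\w}}$, the second estimate.

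I do not expect any genuine obstacle here: this is the $\delta = 0$ specialization of an already-proven theorem, and the argument is purely algebraic once the minimality inequality and the Bregman-distance identity are in place. The only point requiring a word of care is that for exact data one no longer needs to absorb a $\delta^2/(2\a)$ term, so the parameter $\a$ may in principle be taken down to $0$ (as the authors note in Theorem \ref{theo:lc2}), although the estimates are only informative for $\a > 0$; I would simply remark that the proof is analogous to that of Theorem \ref{theo:nls1} with $\delta = 0$ and display the resulting simplified quadratic inequality for $\gamma$.
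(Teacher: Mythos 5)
Your proposal is correct and takes exactly the paper's route: the paper proves Theorem \ref{theo:nlc1} simply by specializing the argument of Theorem \ref{theo:nls1} to $\delta = 0$, which is what you carry out in detail (minimality at $\ua$, the Bregman identity, splitting $-\dF{\ub}(\ua-\ub)$ and invoking \eqref{cone}, then the quadratic inequality in $\gamma$). The only cosmetic slip is the prefactor of $\gamma^2$: your direct comparison with exact data gives $\tfrac12\gamma^2 \leq \a\dhnorma{\w}\gamma$ (hence even the sharper bound $\gamma \leq 2\a\dhnorma{\w}$), whereas you then quote $\tfrac14\gamma^2$ as in the paper's noisy-data detour through $\yd$; either way the stated estimates $\norma{\F{\ua}-\F{\ub}} \leq 4\a\dhnorma{\w}$ and $\db{\dF{\ub}^*\w}{\ua}{\ub} \leq 4\a\dhnorm{\w}/(1-c\dhnorma{\w})$ follow.
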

\begin{proof}
The proof is analogous to the proof of theorem \ref{theo:nls1}, taking
$\delta = 0$.
\end{proof}

\subsection{Rates of convergence for source condition of type II}

In this subsection we consider once again the source condition presented in
(\ref{source:l2}), i.e. we assume the existence of
$$
\xi \in \range{ \dF{\ub}^*\dF{\ub}  } \cap \sg{\ub} \neq \varnothing \, .
$$
The assumption above is equivalent the existence of an element $\w \in \U$
with
\begin{equation}\label{source:nl2}
\xi = \dF{\ub}^*\dF{\ub} \w \in \sg{\ub} \, .
\end{equation}

\begin{theorem}[Stability]\label{theo:nls2}
Let the assumptions \ref{assump:1}, \ref{assump:2} hold as well as estimate
\eqref{eq:ruido}. Moreover, let $\H$ be a Hilbert space and assume the
existence of an $\hh$-minimizing solution $\ub$ of \eqref{eq:princ} in
the interior of $\dom{F}$. Assume also the existence of $\w \in \U$ such
that \eqref{source:nl2} is satisfied and $c \norma{\dF{\ub}\w} < 1$.
Then, for $\a$ sufficiently small the following estimates hold:
$$
\| \F{\uad} - \F{\ub} \| \leq \a \norma{ \dF{\ub} \w } + g(\alpha,\delta) \, ,
$$
\begin{equation}\label{temp25}
\db{\cfnld}{\uad}{\ub} \leq \frac{\alpha s + (c s)^2/2
+ \delta g(\alpha,\delta) + cs \left( \delta + \alpha \norma{\dF{\ub}\w}
\right) } {\alpha \left( 1 - c \norma{\dF{\ub} \w} \right) } \, ,
\end{equation}
where $g(\a,\delta) = \delta + \sqrt{ \left( \delta + cs \right)^2 +
2 \a s \left( 1 + c \norma{\dF{\ub}\w} \right) }$ and $s = \db{\cfnld}
{\ub - \a \w}{\ub}$.
\end{theorem}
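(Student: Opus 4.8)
The plan is to mimic the structure of the proof of Theorem~\ref{theo:ls2} (the linear type~II case), replacing $\F\ub$ and $\Fd\F\w$ by their nonlinear counterparts and absorbing the linearization error through the nonlinearity condition~\eqref{cone}. First I would start from the minimizing property of $\uad$ for~\eqref{eq:regula}, namely $\tfrac12\norm{\F{\uad}-\yd}+\a\h{\uad}\le\tfrac12\norm{\F{\uad}-\yd}\big|_{\uad=\ub-\a\w}+\a\h{\ub-\a\w}$, and expand both sides. On the penalty side I rewrite the $\hh$-differences in terms of Bregman distances with subgradient $\cfnld=\dF{\ub}^*\dF{\ub}\w$, exactly as in~\eqref{temp1}, using $\inner{\cfnld}{v}=\inner{\dF{\ub}^*\dF{\ub}\w}{v}=\hinner{\dF{\ub}\w}{\dF{\ub}v}$ (valid since $\H$ is Hilbert). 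This produces cross terms of the form $\a\hinner{\dF{\ub}\w}{\dF{\ub}(\uad-\ub+\a\w)}$, which I would split as $\hinner{\dF{\ub}\w}{\F{\uad}-\F{\ub}}$ plus the linearization remainder $\hinner{\dF{\ub}\w}{\F{\uad}-\F{\ub}-\dF{\ub}(\uad-\ub)}$, the latter bounded by $c\norma{\dF{\ub}\w}\,\db{\cfnld}{\uad}{\ub}$ via~\eqref{cone}.

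Next I would complete the square on the data-misfit side. Writing $\gamma:=\norma{\F{\uad}-\F{\ub}+\a\dF{\ub}\w}$ (the nonlinear analogue of $\norma{\F(\uad-\ub+\a\w)}$ from the linear proof) and using~\eqref{eq:ruido} together with Cauchy--Schwarz, the inequality should reduce to something of the shape
\begin{equation*}
\tfrac12\gamma^2+\a\bigl(1-c\norma{\dF{\ub}\w}\bigr)\db{\cfnld}{\uad}{\ub}
\ \le\ (\delta+cs)\gamma+\a s\bigl(1+c\norma{\dF{\ub}\w}\bigr),
\end{equation*}
where $s=\db{\cfnld}{\ub-\a\w}{\ub}$ collects the terms coming from evaluating the comparison element at $\ub-\a\w$ (just as $\db{\Fd\F\w}{\ub-\a\w}{\ub}$ appears in Theorem~\ref{theo:ls2}), and the extra $cs$-terms absorb the linearization error of the \emph{comparison} point $\ub-\a\w$. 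The sign condition $c\norma{\dF{\ub}\w}<1$ is exactly what keeps the Bregman coefficient on the left positive, and ``$\a$ sufficiently small'' guarantees $\ub-\a\w\in\dom{F}\cap\bola{\rho}{\ub}$ so that~\eqref{cone} applies there.

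From this master inequality I would extract the two claims in the usual two ways. Dropping the nonnegative Bregman term gives a quadratic inequality $\tfrac12\gamma^2-(\delta+cs)\gamma-\a s(1+c\norma{\dF{\ub}\w})\le0$ in $\gamma$; solving it yields $\gamma\le g(\a,\delta):=\delta+\sqrt{(\delta+cs)^2+2\a s(1+c\norma{\dF{\ub}\w})}$ — note the $\delta$ outside the root comes from the $(\delta+cs)$ coefficient after one also uses $cs\le g$ bookkeeping — and then $\norma{\F{\uad}-\F{\ub}}\le\gamma+\a\norma{\dF{\ub}\w}\le\a\norma{\dF{\ub}\w}+g(\a,\delta)$, which is the first estimate. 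Dropping instead the $\tfrac12\gamma^2$ term and substituting the just-obtained bound $\gamma\le g(\a,\delta)$ (and $\gamma\le\delta+\alpha\norma{\dF{\ub}\w}+\cdots$) into the right-hand side, then dividing by $\a(1-c\norma{\dF{\ub}\w})$, produces~\eqref{temp25}. The main obstacle I anticipate is purely bookkeeping: tracking the several occurrences of the linearization constant $c$ — one from the misfit cross term at $\uad$, one from the comparison point $\ub-\a\w$ — so that the $cs$ and $c\norma{\dF{\ub}\w}$ contributions land precisely in the places shown in $g(\a,\delta)$ and~\eqref{temp25}, rather than any conceptual difficulty; the completion-of-square and quadratic-formula steps are routine once the terms are grouped correctly.
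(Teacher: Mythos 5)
Your plan follows the same route as the paper's proof: compare the minimizer $\uad$ with the shifted element $\ub-\a\w$, rewrite the penalty differences as Bregman distances with $\xi=F'(\ub)^*F'(\ub)\w$ via $\inner{\xi}{v}=\hinner{F'(\ub)\w}{F'(\ub)v}$, complete the square in $v=F(\uad)-F(\ub)+\a F'(\ub)\w$, absorb the two linearization errors (at $\uad$ and at the comparison point $\ub-\a\w$) through \eqref{cone}, and finally split the resulting inequality in $\gamma=\norma{v}$ into a quadratic bound for $\gamma$ and a bound for the Bregman term, using $c\norma{F'(\ub)\w}<1$ and $\a$ small so that $\ub-\a\w\in\dom{F}\cap\bola{\rho}{\ub}$. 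The paper packages exactly this computation through an auxiliary functional $\varPhi$ and three terms $T_1,T_2,T_3$, but the substance is identical.

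The one genuine defect is in your master inequality: you let the comparison-point linearization error multiply $\gamma$, i.e.\ you write the linear term as $(\delta+cs)\gamma$. Solving that quadratic puts $\delta+cs$, not $\delta$, in front of the square root, so you only obtain $\gamma\le cs+g(\a,\delta)$ and a correspondingly weaker version of \eqref{temp25}; your remark that ``$cs\le g$ bookkeeping'' recovers the stated $g(\a,\delta)$ does not work. To get the constants claimed in the theorem you must keep those error contributions as additive constants, as the paper does: bound $\tfrac12\norm{F(\ub-\a\w)-F(\ub)+\a F'(\ub)\w}\le\tfrac{(cs)^2}{2}$ and, after splitting the noise term, $\left|\hinner{F(\ub-\a\w)-F(\ub)+\a F'(\ub)\w}{\yd-y}\right|\le\delta cs$, while the linearization errors paired with $F'(\ub)\w$ give $\a c\norma{F'(\ub)\w}\left(\db{\xi}{\uad}{\ub}+s\right)$. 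Then the only term linear in $\gamma$ is $\delta\gamma$, the quadratic formula yields $\gamma\le\delta+\sqrt{(\delta+cs)^2+2\a s\left(1+c\norma{F'(\ub)\w}\right)}=g(\a,\delta)$ exactly, and dropping $\tfrac12\gamma^2$ and inserting $\gamma\le g(\a,\delta)$ gives \eqref{temp25} verbatim. With that regrouping your argument coincides with the paper's.
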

\begin{proof}
Since $\uad$ is the minimizer of \eqref{eq:regula}, it follows that
\begin{eqnarray}\label{temp8}
0 & \geq & \dfrac{1}{2} \norm{ \F{\uad} - \yd } - \dfrac{1}{2}
  \norm{ \F{u} - \yd } + \a \left(  \h{\uad} - \h{u} \right) \nonumber \\
  &   =  & \dfrac{1}{2} \norm{\F{\uad}} - \dfrac{1}{2} \norm{ \F{u} } +
  \hinner{\F{u} - \F{\uad}}{\yd} \nonumber \\
  &      & + \ \a \left(  \h{\uad} - \h{u} \right) \nonumber \\
  &   =  & \pp{\uad} - \pp{u} \, .
\end{eqnarray}
where $\pp{u} = \dfrac{1}{2} \norm{ \F{u} - q } + \a \db{\cfnld}{u}{\ub} -
\hinner{\F{u}}{\yd - q} + \a \inner{\cfnld}{u} $,
$q = \F{\ub} - \a \dF{\ub}\w$ and $\xi$ is given by source condition
\eqref{source:nl2}.

From \eqref{temp8} we have $\pp{\uad} \leq \pp{u}$. By the definition of
$\pp{\cdot}$, taking $u = \ub -\a \w$ and setting $v = \F{\uad} - \F{\ub}
+ \a \dF{\ub}\w $ we obtain
\begin{eqnarray}\label{temp12}
\dfrac{1}{2} \norm{ v } + \a \db{\cfnld}{\uad}{\ub} & \leq & \a s
+ T_1 + T_2 + T_3 \, ,
\end{eqnarray}
where $s$ is given in the theorem, and
$$
T_1 = \dfrac{1}{2} \norm{ \F{\ub - \a \w } - \F{\ub} + \a \dF{\ub}\w } \, ,
$$
$$
T_2 = \left| \hinner{\F{\uad} -  \F{\ub - \a \w }}{\yd -  y } \right| \, ,
$$
$$
T_3 = \a \hinner{\dF{\ub}\w}{\F{\uad} - \F{\ub - \a \w }
- \dF{\ub} \left( \uad - \left( \ub - \a \w \right) \right)} \, .
$$
The next step is to estimate each one of the constants $T_j$ above. We use the
nonlinear condition \eqref{cone}, Cauchy-Schwarz, and some algebraic
manipulation to obtain $T_1 \leq \frac{c^2 s^2}{2}$,
\begin{eqnarray*}
T_2 & \leq & \left| \hinner{ v }{\yd -  y } \right| +
\left| \hinner{ \F{\ub - \a \w } - \F{\ub} + \a \dF{\ub}\w -  }{\yd -  y }
\right| \nonumber \\
& \leq & \norma{v} \norma{\yd -  y} + c \db{\cfnld}{\ub - \a \w}{\ub}
\norma{\yd -  y} \nonumber \\
& \leq & \delta \norma{v} + \delta c s \, ,
\end{eqnarray*}
and
\begin{eqnarray*}
T_3 & = &\a \hinner{\dF{\ub}\w}{\F{\uad} - \F{\ub} - \dF{\ub}
\left( \uad - \ub \right) }  \nonumber \\
& & + \a \hinner{\dF{\ub}\w}{ - \left( \F{\ub - \a \w } - \F{\ub} +
\a \dF{\ub} \w \right)  } \nonumber \\
& \leq & \a \norma{\dF{\ub}\w} \norma{\F{\uad} - \F{\ub} - \dF{\ub}
\left( \uad - \ub \right) }  \nonumber \\
& & + \a \norma{\dF{\ub}\w} \norma{\F{\ub - \a \w } - \F{\ub} +
\a \dF{\ub} \w } \nonumber \\
& \leq & \a \norma{\dF{\ub}\w} c \db{\cfnld}{\uad}{\ub} +
\a \norma{\dF{\ub}\w} c \db{\cfnld}{\ub - \a \w}{\ub} \nonumber \\
& = &\a c \norma{\dF{\ub}\w}  \db{\cfnld}{\uad}{\ub} + \a c s
\norma{\dF{\ub}\w} \, .
\end{eqnarray*}
Using these estimates in \eqref{temp12}, we obtain
\begin{eqnarray*}
\norm{ v } + 2 \a \db{\cfnld}{\uad}{\ub}  \left[ 1 - c \norma{\dF{\ub}\w}
\right]  & \leq & 2 \delta \norma{v} + 2 \a s + (c s) ^2 \nonumber \\
& & +  2 \delta c s + 2 \a c s \norma{\dF{\ub}\w} \, .
\end{eqnarray*}
Analogously as in the proof of theorem \ref{theo:ls2}, each term on the left
hand side of the last inequality is estimated separately by the right hand
side. This allows the derivation of an inequality described by a polynomial
of second degree. From this inequality, the theorem follows.
\end{proof}

\begin{theorem}[Convergence]\label{theo:nlc2}
Let assumptions \ref{assump:1}, \ref{assump:2} hold and assume $\H$ to be
a Hilbert space. Moreover, assume the existence of an $\hh$-minimizing
solution $\ub$ of \eqref{eq:princ} in the interior of $\dom{F}$, and also
the existence of $\w \in \U$ such that \eqref{source:nl2} is satisfied,
and $c \norma{\dF{\ub}\w} < 1$.
Then, for $\a$ sufficiently small the following estimates hold:
$$
\norma{\F{\ua} - \F{\ub} } \leq \a \norma{ \dF{\ub} \w } +
\sqrt{\left( cs \right)^2 + 2 \a s \left( 1 + c \norma{\dF{\ub}\w} \right)}\, ,
$$
\begin{equation}\label{temp28}
\db{\cfnld}{\ua}{\ub} \leq \frac{\alpha s + (c s)^2/2 +
\a c s \norma{\dF{\ub}\w}_{\H} } {\alpha \left( 1 - c \norma{\dF{\ub} \w}_{\H}
\right) } \, ,
\end{equation}
where $s = \db{\cfnld}{\ub - \a \w}{\ub}$.
\end{theorem}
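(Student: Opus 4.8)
The plan is to obtain Theorem~\ref{theo:nlc2} as the exact-data ($\delta=0$) specialization of the proof of Theorem~\ref{theo:nls2}: setting $\delta=0$ makes the data term vanish and shortens the bookkeeping, but the three quadratic-completion steps are the same. First I would use that, for exact data, $\ua$ minimizes $\J{\a}{\cdot}=\tfrac{1}{2}\norm{\F{\cdot}-y}+\a\h{\cdot}$, so that $\J{\a}{\ua}\le\J{\a}{\ub-\a\w}$. Expanding the quadratic term, inserting the Bregman distance generated by $\cfnld=\dF{\ub}^{*}\dF{\ub}\w\in\sg{\ub}$, and collecting terms exactly as in \eqref{temp8} yields $\pp{\ua}\le\pp{\ub-\a\w}$ for the same auxiliary functional
$$
\pp{u}=\tfrac{1}{2}\norm{\F{u}-q}+\a\db{\cfnld}{u}{\ub}-\hinner{\F{u}}{y-q}+\a\inner{\cfnld}{u},\qquad q=\F{\ub}-\a\dF{\ub}\w,
$$
now with $\yd$ replaced by $y$.

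Next, writing $v=\F{\ua}-\F{\ub}+\a\dF{\ub}\w$ and unwinding $\pp{\ua}\le\pp{\ub-\a\w}$ exactly as in the derivation of \eqref{temp12}, I would reach
$$
\tfrac{1}{2}\norm{v}+\a\db{\cfnld}{\ua}{\ub}\ \le\ \a s+T_{1}+T_{3},
$$
with $T_{2}=0$ because $\yd=y$, and $T_{1},T_{3}$ the same quantities as in Theorem~\ref{theo:nls2}. Their bounds are then literally those already established there: the nonlinearity condition \eqref{cone} gives $T_{1}\le\tfrac{(cs)^{2}}{2}$, and, splitting the increment $\ua-(\ub-\a\w)$ and using $\db{\cfnld}{\ub-\a\w}{\ub}=s$, one gets $T_{3}\le\a c\norma{\dF{\ub}\w}\,\db{\cfnld}{\ua}{\ub}+\a c s\norma{\dF{\ub}\w}$. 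Substituting and rearranging produces
$$
\norm{v}+2\a\bigl(1-c\norma{\dF{\ub}\w}\bigr)\db{\cfnld}{\ua}{\ub}\ \le\ 2\a s+(cs)^{2}+2\a c s\norma{\dF{\ub}\w}.
$$

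Finally, following the splitting used in Theorem~\ref{theo:ls2}, I would bound each of the two nonnegative terms on the left-hand side separately by the whole right-hand side. Discarding the Bregman term leaves a quadratic inequality for $\gamma=\norma{v}$, namely $\gamma^{2}\le(cs)^{2}+2\a s\bigl(1+c\norma{\dF{\ub}\w}\bigr)$ (the linear-in-$\gamma$ contribution present in \eqref{temp2.11a} disappears since $\delta=0$), hence $\gamma\le\sqrt{(cs)^{2}+2\a s(1+c\norma{\dF{\ub}\w})}$, and the first estimate follows from $\norma{\F{\ua}-\F{\ub}}\le\gamma+\a\norma{\dF{\ub}\w}$. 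Discarding instead the $\norm{v}$ term and solving for the Bregman distance, which is legitimate because $1-c\norma{\dF{\ub}\w}>0$ by hypothesis, gives \eqref{temp28}. The one point that is not pure bookkeeping is the same as in Theorem~\ref{theo:nls2}: one must know \emph{a priori} that $\ua\in\bola{\rho}{\ub}$ (and that $\ub-\a\w\in\bola{\rho}{\ub}$) so that \eqref{cone} may be invoked at $\ua$; this is exactly what the hypotheses ``$\a$ sufficiently small'' and $\ub$ interior to $\dom{F}$ are there to secure, via the minimizing property of $\ua$ together with the source condition, just as in the stability proof.
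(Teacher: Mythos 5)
Your proposal is correct and follows the same route as the paper, which proves Theorem \ref{theo:nlc2} precisely by specializing the proof of Theorem \ref{theo:nls2} to $\delta=0$ (so $T_2$ vanishes and the quadratic-completion steps yield the two stated bounds). Your worked-out constants match the statement, and your closing remark about needing $\ua,\ \ub-\a\w \in \bola{\rho}{\ub}$ for small $\a$ is exactly the point implicit in the paper's hypotheses.
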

\begin{proof}
The proof is analogous to the proof of theorem \ref{theo:nls2}, taking
$\delta = 0$.
\end{proof}

\begin{corollary}\label{cor:nl}
Let assumptions of the theorem \ref{theo:nls2} hold true. Moreover, assume
that $\hh$ is twice differentiable in a neighborhood $U$ of $\ub$, and that
there exist a number $M >0$ such that for all $u \in U$ and for all $v \in \U$,
the inequality $ \inner{\hh''(u)v}{v} \leq M \norm{v}$ hold.
Then, for the choice of parameter
$\a \sim \delta^{\frac{2}{3} }$ we have $\db{\xi}{\uad}{\ub} =
\ordem{ \delta^{\frac{4}{3} } }$, while for exact data we obtain
$\db{\xi}{\uad}{\ub} = \ordem{\a^2}$ .
\end{corollary}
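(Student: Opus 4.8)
The plan is to follow the same strategy as in the proof of Corollary~\ref{cor:l}: a second-order Taylor expansion of $\hh$ at $\ub$ will be used to bound the quantity $s = \db{\xi}{\ub - \a\w}{\ub}$ appearing on the right-hand sides of the estimates in Theorems~\ref{theo:nls2} and~\ref{theo:nlc2}, and this bound will then be inserted into those estimates together with the parameter choice $\a \sim \delta^{2/3}$.

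First I would observe that, since $\hh$ is twice differentiable on the neighbourhood $U$ of $\ub$, for $\a$ sufficiently small the point $\ub - \a\w$ and the whole segment joining it to $\ub$ lie in $U$ (recall $\ub$ is interior to $\dom{F}$ by the assumptions of Theorem~\ref{theo:nls2}); Taylor's theorem with $\xi = \hh'(\ub)$ then gives, for some $\mu \in [\ub - \a\w, \ub]$,
\begin{equation*}
s = \db{\xi}{\ub - \a\w}{\ub} = \tfrac12 \inner{\hh''(\mu)(-\a\w)}{-\a\w} \le \tfrac{M}{2}\, \a^2\, \unorm{\w} \, ,
\end{equation*}
so that $s = \ordem{\a^2}$.

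Next, for the noisy-data case I would substitute $s = \ordem{\a^2}$ into \eqref{temp25}. With $\a \sim \delta^{2/3}$ one has $\a^2 \sim \delta^{4/3}$, which is of lower order than $\delta$; hence $cs$ is negligible against $\delta$, and the quantity $g(\a,\delta) = \delta + \sqrt{(\delta + cs)^2 + 2\a s\,(1 + c\norma{\dF{\ub}\w})}$ is $\ordem{\delta}$ (the two summands under the root being $\ordem{\delta^2}$ and $\ordem{\a^3} = \ordem{\delta^2}$). The four terms of the numerator of \eqref{temp25} are then, in order, $\a s = \ordem{\a^3} = \ordem{\delta^2}$, $(cs)^2/2 = \ordem{\a^4}$, $\delta\, g(\a,\delta) = \ordem{\delta^2}$, and $cs\,(\delta + \a\norma{\dF{\ub}\w}) = \ordem{\a^2}\cdot\ordem{\a} = \ordem{\delta^2}$, while the denominator is $\a$ times a fixed positive constant. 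Dividing yields $\db{\xi}{\uad}{\ub} = \ordem{\delta^2/\a} = \ordem{\delta^{4/3}}$.

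Finally, for exact data I would substitute $s = \ordem{\a^2}$ into \eqref{temp28}: the numerator $\a s + (cs)^2/2 + \a c s \norma{\dF{\ub}\w}_{\H}$ is then $\ordem{\a^3}$, and dividing by the denominator $\a(1 - c\norma{\dF{\ub}\w}_{\H})$ gives $\db{\xi}{\ua}{\ub} = \ordem{\a^2}$. There is no genuine difficulty in this argument; the only step requiring care is the order bookkeeping in the noisy case — verifying that $cs$ is dominated by $\delta$ and that $\a s$ is of order exactly $\delta^2$ — which is precisely what singles out $2/3$ as the balancing exponent.
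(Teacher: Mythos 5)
Your proposal is correct and follows exactly the route the paper intends: Taylor expansion of $\hh$ at $\ub$ with $\xi=\hh'(\ub)$ to get $s=\db{\xi}{\ub-\a\w}{\ub}\leq \frac{M}{2}\a^2\unorm{\w}=\ordem{\a^2}$, then substitution into the estimates \eqref{temp25} and \eqref{temp28} of Theorems \ref{theo:nls2} and \ref{theo:nlc2} with $\a\sim\delta^{2/3}$. Your order bookkeeping in the noisy case (each numerator term $\ordem{\delta^2}$, denominator $\sim\a$) is exactly the detail the paper leaves implicit by referring to Corollary \ref{cor:l}, and it is carried out correctly.
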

\begin{proof}
The proof is similar to the proof of corollary \ref{cor:l} and we use
theorems \ref{theo:nls2} and \ref{theo:nlc2}.
\end{proof}

\section{An iterated Tikhonov method for nonlinear problems}
\label{sec:iterative}

\renewcommand{\F}[1]{F \left( #1 \right) }
\renewcommand{\dF}[1]{F' \left( #1 \right) }

On this section we investigate an iterative method based on Bregman distances
for nonlinear problems. We consider the operator $\FF : \U \rightarrow \H$
defined between a Banach space and a Hilbert space, Fr\'echet differentiable
with closed and convex domain $\dom{\FF}$. The operator equation
\eqref{eq:princ} is ill-posed in the sense of Hadamard, the solution does
not need to be unique, so we define
$$
\S{y} = \left\lbrace  u \in \dom{\FF} \ \mid \ \F{u} = y \right\rbrace \, .
$$

The method was originally proposed by Osher in \cite{Osher2005}, who
generalized the ideas of the method ROF \cite{Rudin1992}. One important
reference is \cite{Bachmayr2007}.

The analyzed method generalizes the iterated Tikhonov method, it is given by
\begin{equation}\label{alg1}
u_{k+1} \in \argmin \left\lbrace \dfrac{1}{2} \norm{ \F{u} - \yd } +
\a_k \db{\xi_k}{u}{u_k} \right\rbrace \, ,
\end{equation}
where the subgradient required is updated by the rule
\begin{equation}\label{alg2} 
\xi_{k+1} = \xi_k - \dfrac{1}{\a_k} \dF{u_{k+1}}^*
\left( \F{u_{k+1}} - \yd \right) \, .
\end{equation}

\begin{algorithm}
\mbox{\vskip-1.05cm \qquad\qquad\qquad Generalized Tikhonov with Bregman distance} \label{alg:iterative}
\begin{algorithmic}[1]
\Require $u_0 \in \dom{\FF} \cap \domf$, $\xi_0 \in \sg{u_0}$
\State $ k = 0 $
\State $\a_k > 0$
\Repeat
\State $ u_{k+1} \in \argmin \left\lbrace \dfrac{1}{2} \norm{ \F{u} - \yd }
       + \a_k \db{\xi_k}{u}{u_k} \right\rbrace $ 
\State $ \xi_{k+1} = \xi_k - \dfrac{1}{\a_k} \dF{u_{k+1}}^*
       \left( \F{u_{k+1}} - \yd \right) $
\State $k = k+1$
\State $\a_k > 0$
\Until convergence
\end{algorithmic}
\end{algorithm}

\begin{remark}\label{remark:update}
It is easy to see that the definition \eqref{alg2} is equivalent to
\begin{equation}\label{alg2mod}
\xi_{k+1} = \xi_0 - \sum_{j=0}^{k} \dfrac{1}{\a_j} \dF{u_{j+1}}^*
\left( \F{u_{j+1}} - \yd \right) \, .
\end{equation}
\end{remark}

We obtain monotonicity of residuals directly from the above definitions.

\begin{lemma}\label{lemma:nli1}
The iterates defined by algorithm \ref{alg:iterative} satisfy the estimate
$$
\norma{ \yd - \F{ u_{k+1} } } \leq \norma{ \yd - \F{ u_{k} } } \, .
$$
\end{lemma}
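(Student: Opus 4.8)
The plan is to exploit the fact that $u_{k+1}$ is a minimizer of the functional in \eqref{alg1}, so that plugging in the competitor $u = u_k$ yields an inequality. Concretely, since $u_{k+1} \in \argmin \{ \tfrac12 \norm{\F{u} - \yd} + \a_k \db{\xi_k}{u}{u_k} \}$, and since $\db{\xi_k}{u_k}{u_k} = 0$ by the definition of the Bregman distance, we get immediately
\[
\dfrac{1}{2} \norm{\F{u_{k+1}} - \yd} + \a_k \db{\xi_k}{u_{k+1}}{u_k}
\ \leq \
\dfrac{1}{2} \norm{\F{u_k} - \yd} + \a_k \db{\xi_k}{u_k}{u_k}
\ = \
\dfrac{1}{2} \norm{\F{u_k} - \yd} \, .
\]

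The second step is to discard the nonnegative term $\a_k \db{\xi_k}{u_{k+1}}{u_k}$ on the left-hand side (Bregman distances are nonnegative by convexity of $\hh$, and $\a_k > 0$). This leaves
\[
\dfrac{1}{2} \norm{\F{u_{k+1}} - \yd} \ \leq \ \dfrac{1}{2} \norm{\F{u_k} - \yd} \, ,
\]
and taking square roots (both sides are nonnegative, and $\norm{\cdot}$ denotes the squared norm in this paper's notation) gives $\norma{\yd - \F{u_{k+1}}} \leq \norma{\yd - \F{u_k}}$, which is the claim. One should note that the existence of a minimizer $u_{k+1}$ in \eqref{alg1} is presupposed by the algorithm statement, so it need not be reproved here; it would follow from the direct method under Assumption~\ref{assump:1} applied with $\hh$ replaced by the convex functional $u \mapsto \db{\xi_k}{u}{u_k}$.

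There is essentially no obstacle: the argument is a one-line consequence of the minimizing property plus nonnegativity of the Bregman distance, and it does not even use the update rule \eqref{alg2} or Fréchet differentiability of $\FF$. The only point requiring a modicum of care is the notational convention that $\norm{\cdot}$ abbreviates the \emph{squared} norm while $\norma{\cdot}$ is the norm itself, so that the final step is a genuine monotonicity of residual norms and not merely of their squares — but these are equivalent since $t \mapsto \sqrt{t}$ is monotone on $[0,\infty)$.
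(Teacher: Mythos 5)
Your argument is correct and is exactly the paper's proof: compare the value of the Tikhonov-type functional at the minimizer $u_{k+1}$ with its value at the competitor $u_k$, use $\db{\xi_k}{u_k}{u_k}=0$, discard the nonnegative Bregman term, and take square roots. The paper's proof of Lemma~\ref{lemma:nli1} states this in one line; your version merely spells out the same steps.
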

\begin{proof}
Defining $\Jd{\a}{u} = \dfrac{1}{2} \norm{ \F{u} - \yd } +
\a_k \db{\xi_k}{u}{u_k}$, the lemma follows the fact that $u_{k+1}$ is
a minimizer of \eqref{alg1}, i.e., $\Jd{\a}{u_{k+1}} \leq \Jd{\a}{u_k}$.
\end{proof}

Under a nonlinearity condition on $\FF$ we prove a monotonicity result for
the Bregman distance, i.e., $\db{\xi_{k+1}}{\ub}{u_{k+1}} \leq
\db{ \xi_k }{\ub}{u_k}$.

\begin{lemma}\label{lemma:nli2}
Let $\yd \in \H$ be given the data. If for some $u_k$ and $\xi_k$, the
iterate $u_{k+1}$ in \eqref{alg1} satisfies
$$
\norma{\yd - \F{u_{k+1}} - \dF{u_{k+1}} \left( \ub - u_{k+1} \right) } \leq
c \norma{\yd - \F{u_{k+1}}} \, ,
$$
for some $0 < c < 1$, then
\begin{equation}\label{eq:3ptos}
\db{\xi_{k+1}}{\ub}{u_{k+1}} - \db{ \xi_k }{\ub}{u_k} +
\db{\xi_{k}}{ u_{k+1} }{ u_k } \leq - \dfrac{1-c}{\a_k}
\norm{\yd - \F{ u_{k+1} } } \, .
\end{equation}
\end{lemma}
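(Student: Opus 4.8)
The plan is to expand the three-point Bregman expression using the update rule \eqref{alg2} and then estimate the cross terms by the optimality of $u_{k+1}$ and the assumed nonlinearity condition. First I would write out the "three-point identity'' for Bregman distances: for any $\xi_k \in \sg{u_k}$,
$$
\db{\xi_k}{\ub}{u_k} = \db{\xi_k}{u_{k+1}}{u_k} + \db{\xi_k}{\ub}{u_{k+1}}
+ \inner{\xi_k - \xi_{k+1}}{\ub - u_{k+1}}
+ \inner{\xi_{k+1}}{\ub - u_{k+1}} - \inner{\xi_k}{\ub - u_{k+1}} \, ,
$$
but more cleanly, using that $\db{\xi}{a}{c} = \h{a} - \h{c} - \inner{\xi}{a-c}$, one gets directly
$$
\db{\xi_{k+1}}{\ub}{u_{k+1}} - \db{\xi_k}{\ub}{u_k} + \db{\xi_k}{u_{k+1}}{u_k}
= \inner{\xi_k - \xi_{k+1}}{\ub - u_{k+1}} \, .
$$
This identity is the backbone; it is a purely algebraic rearrangement and should be checked termwise.

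Next I would substitute the subgradient update \eqref{alg2}, namely $\xi_k - \xi_{k+1} = \frac{1}{\a_k}\dF{u_{k+1}}^*(\F{u_{k+1}} - \yd)$, into the right-hand side, giving
$$
\inner{\xi_k - \xi_{k+1}}{\ub - u_{k+1}} = \frac{1}{\a_k}\hinner{\F{u_{k+1}} - \yd}{\dF{u_{k+1}}(\ub - u_{k+1})} \, .
$$
Then I would split $\dF{u_{k+1}}(\ub - u_{k+1}) = \big(\F{\ub} - \F{u_{k+1}} - (\F{\ub} - \F{u_{k+1}} - \dF{u_{k+1}}(\ub - u_{k+1}))\big)$ and use $\F{\ub} = y$, so that
$$
\hinner{\F{u_{k+1}} - \yd}{\dF{u_{k+1}}(\ub - u_{k+1})}
= \hinner{\F{u_{k+1}} - \yd}{y - \F{u_{k+1}}} - \hinner{\F{u_{k+1}} - \yd}{\F{\ub} - \F{u_{k+1}} - \dF{u_{k+1}}(\ub - u_{k+1})} \, .
$$
The first inner product I would bound by writing $y - \F{u_{k+1}} = (\yd - \F{u_{k+1}}) + (y - \yd)$ and using \eqref{eq:ruido}; actually, to get the clean bound in \eqref{eq:3ptos} I expect one should simply estimate $\hinner{\F{u_{k+1}} - \yd}{y - \F{u_{k+1}}} \le -\norm{\yd - \F{u_{k+1}}} + \delta\,\norma{\yd - \F{u_{k+1}}}$, or — if the intended statement absorbs the noise differently — bound it by $-\norm{\yd - \F{u_{k+1}}}$ plus a term handled via Lemma \ref{lemma:nli1}. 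The second inner product I would bound in absolute value by $\norma{\F{u_{k+1}} - \yd}\cdot c\,\norma{\yd - \F{u_{k+1}}} = c\,\norm{\yd - \F{u_{k+1}}}$ using Cauchy--Schwarz and the hypothesis. Combining, the right-hand side is at most $\frac{1}{\a_k}\big(-(1-c)\norm{\yd - \F{u_{k+1}}} + (\text{noise term})\big)$, which yields \eqref{eq:3ptos} provided the noise contribution is accounted for or the statement is understood up to that term.

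The main obstacle, I expect, is precisely the bookkeeping of the noise term $\delta$: the clean inequality \eqref{eq:3ptos} as stated has no explicit $\delta$ on the right, so either the cross term $\hinner{\F{u_{k+1}} - \yd}{y-\yd}$ must be controlled and absorbed (perhaps using that $u_{k+1}$ is a minimizer to get $\norm{\F{u_{k+1}} - \yd}$ small, or more likely the intended reading treats $\ub$ as a solution for the \emph{noisy} problem or the lemma is a stepping stone where the $\delta$-term is carried along implicitly). I would therefore present the algebraic identity and the two key estimates carefully, isolate exactly where $\yd$ versus $y$ enters, and then invoke \eqref{source:nl1eq}-type exactness (or Lemma \ref{lemma:nli1} for monotonicity of the residual) to dispose of the remaining term and arrive at \eqref{eq:3ptos}.
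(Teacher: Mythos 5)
Your first two steps --- the three-point identity
$\db{\xi_{k+1}}{\ub}{u_{k+1}} - \db{\xi_k}{\ub}{u_k} + \db{\xi_k}{u_{k+1}}{u_k}
= \inner{\xi_{k+1} - \xi_k}{u_{k+1} - \ub}$ and the substitution of the update rule \eqref{alg2} --- are exactly the paper's first two steps and are correct. The genuine gap comes immediately after: you decompose $F'(u_{k+1})(\ub - u_{k+1})$ around $y = F(\ub)$, which simultaneously creates the noise term $\hinner{F(u_{k+1}) - \yd}{y - \yd}$ that you then cannot dispose of (and explicitly leave unresolved), and misapplies the hypothesis. The lemma's assumption is stated with the \emph{noisy} data itself: it bounds $\norma{\yd - F(u_{k+1}) - F'(u_{k+1})(\ub - u_{k+1})}$ by $c\,\norma{\yd - F(u_{k+1})}$; it says nothing about $\norma{y - F(u_{k+1}) - F'(u_{k+1})(\ub - u_{k+1})}$, so your Cauchy--Schwarz estimate of the second inner product is not covered by the hypothesis (reducing to it via the triangle inequality would cost a further $\delta$). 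There is no hidden reading of the statement and nothing to absorb: \eqref{eq:3ptos} is exact as written, with no $\delta$, once you decompose with respect to $\yd$ rather than $y$.

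Concretely, after
\begin{equation*}
\db{\xi_{k+1}}{\ub}{u_{k+1}} - \db{\xi_k}{\ub}{u_k} + \db{\xi_k}{u_{k+1}}{u_k}
= \dfrac{1}{\a_k}\, \hinner{F(u_{k+1}) - \yd}{\,F'(u_{k+1})\left( \ub - u_{k+1} \right)} ,
\end{equation*}
insert $\pm \left( \yd - F(u_{k+1}) \right)$ inside the second slot, i.e.\ write
\begin{equation*}
F'(u_{k+1})\left( \ub - u_{k+1} \right)
= \left( \yd - F(u_{k+1}) \right)
- \left( \yd - F(u_{k+1}) - F'(u_{k+1})\left( \ub - u_{k+1} \right) \right) .
\end{equation*}
The first piece pairs with $F(u_{k+1}) - \yd$ to give exactly $-\norm{\yd - F(u_{k+1})}$, and the second piece is bounded via Cauchy--Schwarz and the hypothesis by $c\, \norm{\yd - F(u_{k+1})}$; dividing by $\a_k$ yields \eqref{eq:3ptos} with no appeal to \eqref{eq:ruido} or to $F(\ub)=y$ at any point. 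This is precisely the paper's argument (``summing $\pm F(u_{k+1}) - \yd$ inside the inner product''), and it is the one-line correction your proposal needs; your closing suggestions (invoking Lemma \ref{lemma:nli1} or exactness of $\ub$) are not the intended repair and would not remove the spurious $\delta$ term your decomposition introduces.
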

\begin{proof}
This result follows from the equality (see \cite{Bachmayr2007} for details)
\begin{equation*}
\db{\xi{k+1}}{\ub}{u_{k+1}} - \db{\xi_k}{\ub}{u_k} + \db{\xi_k}{u_{k+1}}{u_k} =
\inner{\xi_{k+1} - \xi_k}{u_{k+1} - \ub} \, .
\end{equation*}
Using \eqref{alg2} on the right hand side, summing $\pm \F{u_{k+1}} - \yd$
on the second term (inside the inner product), using Cauchy-Schwarz and
the lemma assumptions, we conclude that estimate \eqref{eq:3ptos} holds.
\end{proof}

The subsequent results are obtained assuming that the nonlinear operator $\FF$
is such that $\dom{\FF} \subseteq \L$ and $\Omega \subset \R^n$ is a bounded
Lipschitz domain, and assuming that the regularization convex functional is
given by
\begin{equation}\label{funcional}
\h{u} = \dfrac{1}{2} \lnorm{u} + \snorma{u} \, .
\end{equation}

\begin{lemma}\label{lemma:nli3}
If $\h{\cdot}$ is a convex functional defined by \eqref{funcional}, then
$$
\dfrac{1}{2} \lnorm{v-u} \leq \db{\xi}{v}{u}
$$
for every $u,v \in \dom{\FF}$ and $\xi \in \sg{u}$.
\end{lemma}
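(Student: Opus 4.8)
The plan is to exploit the strong convexity built into the functional $\h{\cdot}$ defined by \eqref{funcional}. The key observation is that $\h{u} = \frac{1}{2}\lnorm{u} + \snorma{u}$ is the sum of the quadratic term $q(u) := \frac{1}{2}\lnorm{u}$ and the total variation seminorm $\snorma{u}$, which is itself convex. Since the Bregman distance is additive over sums of convex functionals (for matching choices of subgradients), I would write $\db{\xi}{v}{u} = D_q^{\xi_1}(v,u) + D_{|\cdot|_{BV}}^{\xi_2}(v,u)$ where $\xi = \xi_1 + \xi_2$ with $\xi_1 \in \partial q(u)$ and $\xi_2 \in \partial \snorma{u}$. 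The second Bregman distance is nonnegative by convexity of the $BV$ seminorm, so it suffices to bound $D_q^{\xi_1}(v,u)$ from below.

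The core computation is then just the Bregman distance of the quadratic $q(u) = \frac{1}{2}\lnorm{u}$. Here $q$ is Fr\'echet differentiable with $q'(u) = u$ (under the identification of $\L$ with its dual), so $\partial q(u) = \{u\}$ and
$$
D_q^{u}(v,u) = q(v) - q(u) - \inner{u}{v-u} = \frac{1}{2}\lnorm{v} - \frac{1}{2}\lnorm{u} - \inner{u}{v-u} = \frac{1}{2}\lnorm{v-u} \, ,
$$
using the polarization identity $\norm{v} = \norm{u} + 2\inner{u}{v-u} + \norm{v-u}$ exactly as in the proof of Theorem \ref{theo:ls1}. Combining this with the nonnegativity of the $BV$-Bregman term yields $\frac{1}{2}\lnorm{v-u} \leq \db{\xi}{v}{u}$, which is the claim.

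The one subtlety worth addressing carefully is the additivity of subgradients: a priori $\partial \h{u} \supseteq \partial q(u) + \partial \snorma{u}$, but one needs every $\xi \in \partial\h{u}$ to decompose as such a sum. This holds because $q$ is continuous (indeed finite everywhere on $\L$), so the Moreau–Rockafellar sum rule applies and $\partial \h{u} = \partial q(u) + \partial \snorma{u} = \{u\} + \partial\snorma{u}$ exactly. Hence for any $\xi \in \sg{u}$ we may write $\xi = u + \eta$ with $\eta \in \partial\snorma{u}$, and then $\db{\xi}{v}{u} = \frac{1}{2}\lnorm{v-u} + \big(\snorma{v} - \snorma{u} - \inner{\eta}{v-u}\big) \geq \frac{1}{2}\lnorm{v-u}$. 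The main (very mild) obstacle is therefore purely a matter of invoking the right convex-analytic sum rule and confirming that the pairing $\inner{\eta}{v-u}$ makes sense for $u,v \in \dom\FF \subseteq \L$; everything else is the one-line polarization identity.
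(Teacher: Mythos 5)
Your proof is correct, and it is essentially the argument the paper itself has in mind: the paper's ``proof'' of Lemma \ref{lemma:nli3} is only a pointer to \cite{Bleyer2008} with the remark that the result is straightforward once the relevant subgradient calculus is in place, and your use of the Moreau--Rockafellar sum rule to write $\xi = u + \eta$ with $\eta \in \partial\snorma{u}$, followed by the expansion $\norm{v} = \norm{u} + 2\inner{u}{v-u} + \norm{v-u}$ and the nonnegativity of the $BV$-seminorm Bregman term, is precisely that omitted calculation (the sum rule applies since the quadratic term is finite and continuous on all of \L). No gaps; at most one could add the trivial remark that if $\snorma{v} = +\infty$ the right-hand side is $+\infty$ and the inequality holds vacuously.
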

\begin{proof}
This proof is straightforward, once we establish some auxiliary properties
concerning calculus of subgradients. For a complete proof we refer the reader
to \cite{Bleyer2008}.
\end{proof}

\begin{assumption}\label{assum:3}
Let $\FF : \dom{\FF} \subset \L \rightarrow \H$ be a weakly sequentially
closed nonlinear operator, $\dF{\cdot}$ be locally bounded. Moreover, suppose
that the nonlinearity condition
\begin{equation}\label{eq:nlinear}
\norma{\F{v} - \F{u} - \dF{u}\left( v - u \right) } \leq \eta \lnorma{u - v}
\norma{\F{u} - \F{v}}
\end{equation}
is satisfied for every $u$, $v \in \bola{\rho}{\ub} \cap \dom{\FF}$, where
$\eta, \rho > 0 $ and $\bola{\rho}{\ub}$ denotes the open ball around $\ub$
of radius $\rho$ in $\L$ and $\ub \in \S{y} \cap \domf$.
\end{assumption}

\begin{remark}\label{remark:assumption}
We can rewrite the left side of the inequality given in (\ref{eq:nlinear}) as
\begin{equation*}
\norma{ \dF{u}\left( v - u \right) } \leq \left( 1 + \eta \lnorma{u - v}
\right)  \norma{\F{u} - \F{v} } \, .
\end{equation*}
\end{remark}

The next result gives the mean result about the sequence of iterates from
algorithm \ref{alg:iterative} is well-defined.

\begin{proposition}
Let assumption \ref{assum:3} hold, $k \in \N$ and $u_k, \xi_k$ be a
pair of iterates according to algorithm \ref{alg:iterative}. Then, there
exists a minimizer $u_{k+1}$ for \eqref{alg1} and $\xi_{k+1}$ given by
\eqref{alg2} satisfies $\xi_{k+1} \in \sg{u_{k+1}}$.
\end{proposition}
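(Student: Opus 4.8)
The plan is to prove existence of a minimizer $u_{k+1}$ for \eqref{alg1} by the direct method of the calculus of variations, and then to verify the subgradient claim by writing down the first-order optimality condition. Throughout, fix $k$ and write $\Phi_k(u) = \frac{1}{2}\norm{\F{u} - \yd} + \a_k \db{\xi_k}{u}{u_k}$; since $\xi_k \in \sg{u_k}$ by hypothesis, the Bregman distance is well-defined and nonnegative, so $\Phi_k$ is bounded below and the infimum $m := \inf_{u \in \dom{\FF}} \Phi_k(u)$ is finite (it is $\le \Phi_k(u_k) = \frac{1}{2}\norm{\F{u_k}-\yd} < \infty$). Take a minimizing sequence $(v_n) \subset \dom{\FF}$ with $\Phi_k(v_n) \to m$. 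First I would extract coercivity: because $\db{\xi_k}{v_n}{u_k} = \h{v_n} - \h{u_k} - \inner{\xi_k}{v_n - u_k}$ and, by Lemma \ref{lemma:nli3}, $\db{\xi_k}{v_n}{u_k} \ge \frac{1}{2}\lnorm{v_n - u_k}$, the sequence $(v_n)$ is bounded in $\L$; moreover from the definition \eqref{funcional} of $\hh$ and the bound on $\Phi_k(v_n)$ one also controls $\snorma{v_n}$, so $(v_n)$ is bounded in $\bv$. By compactness of the embedding $\bv \hookrightarrow \L$ (using that $\Omega$ is a bounded Lipschitz domain) together with reflexivity/weak-$*$ arguments, pass to a subsequence with $v_n \to u_{k+1}$ strongly in $\L$ and hence along the weak topology in which $\FF$ is sequentially closed.

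Next I would pass to the limit in $\Phi_k$. The operator $\FF$ is weakly sequentially closed and $\dF{\cdot}$ is locally bounded by Assumption \ref{assum:3}; combining weak closedness with lower semicontinuity of $\norma{\cdot}$ in $\H$ gives $\norm{\F{u_{k+1}} - \yd} \le \liminf_n \norm{\F{v_n} - \yd}$. The total-variation seminorm $\snorma{\cdot}$ is lower semicontinuous with respect to $\L$-convergence, and $\frac{1}{2}\lnorm{\cdot}$ is lower semicontinuous as well, so $\h{u_{k+1}} \le \liminf_n \h{v_n}$; since the linear term $\inner{\xi_k}{v_n - u_k}$ converges, we get $\db{\xi_k}{u_{k+1}}{u_k} \le \liminf_n \db{\xi_k}{v_n}{u_k}$. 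Adding the two inequalities yields $\Phi_k(u_{k+1}) \le \liminf_n \Phi_k(v_n) = m$, and since $u_{k+1} \in \dom{\FF}$ (closed) we conclude $\Phi_k(u_{k+1}) = m$, i.e. $u_{k+1}$ is a minimizer of \eqref{alg1}.

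For the subgradient assertion, I would argue via the optimality condition. Since $u_{k+1}$ minimizes $u \mapsto \frac{1}{2}\norm{\F{u}-\yd} + \a_k \h{u} - \a_k\inner{\xi_k}{u}$ over the convex set $\dom{\FF}$ and $\FF$ is Fréchet (here Gâteaux suffices) differentiable, the first-order condition reads $0 \in \dF{u_{k+1}}^*(\F{u_{k+1}} - \yd) - \a_k \xi_k + \a_k \partial\hh(u_{k+1})$ (interpreting the inclusion on the interior of $\dom{\FF}$, or using that $u_k$ lies in $\domf$ so the relevant points are interior to $\domf$). Rearranging, $\xi_k - \frac{1}{\a_k}\dF{u_{k+1}}^*(\F{u_{k+1}} - \yd) \in \sg{u_{k+1}}$, and the left-hand side is exactly $\xi_{k+1}$ by the update rule \eqref{alg2}; hence $\xi_{k+1} \in \sg{u_{k+1}}$.

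The main obstacle is the compactness/lower-semicontinuity step: one must be careful that the sublevel sets of $\Phi_k$ really are precompact in the topology in which $\FF$ is sequentially closed — this is where the structure $\h{u} = \frac12\lnorm{u} + \snorma{u}$, the $\bv \hookrightarrow \L$ compact embedding, and the local boundedness of $\dF{\cdot}$ all have to be used in concert, and where one must check that the Bregman term does not spoil coercivity (which is precisely what Lemma \ref{lemma:nli3} rescues). A secondary subtlety is justifying the optimality inclusion when the minimizer could a priori sit on the boundary of $\dom{\FF}$; invoking that $u_{k+1}$, like $u_0$, lies in $\domf$ and that $\dom{\FF}$ has nonempty interior (Assumption \ref{assump:1}, (A4)) handles this.
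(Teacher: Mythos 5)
Your overall route is the same as the paper's: a direct-method existence argument (bound the Bregman term along a minimizing sequence, extract an $\L$-bound and a $\bv$-bound from the structure \eqref{funcional}, then conclude by compactness and lower semicontinuity), followed by the first-order optimality condition rearranged through the update rule \eqref{alg2} to obtain $\xi_{k+1}\in\sg{u_{k+1}}$. One step, however, fails as literally stated: the embedding $\bv\hookrightarrow\L$ is \emph{not} compact for $\Omega\subset\R^n$ with $n\ge 2$ (it is compact into $L^p(\Omega)$ only for $p<n/(n-1)$; for $n=2$ the embedding into $L^2$ is continuous but not compact, and for $n\ge 3$ it is not even continuous), so you cannot extract a strongly $\L$-convergent subsequence of $(v_n)$ this way. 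The repair is immediate and is in fact what Assumption \ref{assum:3} is tailored to: Lemma \ref{lemma:nli3} already gives boundedness of $(v_n)$ in $\L$, and boundedness of $\Phi_k(v_n)$ gives boundedness of $(F(v_n))$ in $\H$, so one passes to \emph{weakly} convergent subsequences; weak sequential closedness of $F$ identifies the limit of $F(v_n)$ as $F(u_{k+1})$ with $u_{k+1}\in\dom{F}$, and the maps $u\mapsto\frac12\lnorm{u}$, $u\mapsto\snorma{u}$ and $z\mapsto\norma{z-\yd}^2$ are convex and lower semicontinuous, hence weakly lower semicontinuous. With that substitution your existence argument coincides with the paper's, which gets the $\bv$-bound via the identity $\frac{1}{2}\lnorm{u_j}-\inner{\xi_k}{u_j}=\frac{1}{2}\lnorm{u_j-\xi_k}-\frac{1}{2}\lnorm{\xi_k}$ and then invokes compactness.

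On the subgradient claim, your inclusion $0\in F'(u_{k+1})^*\left(F(u_{k+1})-\yd\right)-\a_k\xi_k+\a_k\sg{u_{k+1}}$ is exactly the paper's inequality $\phi_2(v)\ge\phi_2(u_{k+1})+\inner{-\phi_1'(u_{k+1})}{v-u_{k+1}}$ in disguise, but your parenthetical dismissal of the boundary case is not an argument: $u_{k+1}\in\domf$ and nonemptiness of the interior of $\dom{F}$ do not place $u_{k+1}$ in the interior of $\dom{F}$, and for a boundary minimizer the first-order condition acquires the normal cone of $\dom{F}$, so the variational inequality is a priori available only for $v\in\dom{F}$, whereas $\xi_{k+1}\in\sg{u_{k+1}}$ requires it for all $v$. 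The paper is equally terse here (it defers the details to \cite{Bachmayr2007, Bleyer2008}), but a complete write-up should either justify interiority, or derive the inequality by differentiating $t\mapsto\Phi_k\bigl(u_{k+1}+t(v-u_{k+1})\bigr)$ along segments in the convex set $\dom{F}$ and then explain why this yields the full subgradient inequality for $\hh$.
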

\begin{proof}
If there exist a $u$ such that $\Jd{\a_k}{u}$ is finite, then there is a
sequence $(u_j) \in \dom{\FF} \cap \bv$ such that $\lim_j \Jd{\a_k}{u_j}
\rightarrow \beta$, where $\beta = \inf \left\lbrace \Jd{\a_k}{u} \tq
u \in \dom{\FF} \right\rbrace $. In particular, $\db{\xi_k}{u_j}{u_k}
\leq \dfrac{M}{\a_k}$. By definition of the Bregman distance, together with
\eqref{funcional} and observing that $\dfrac{1}{2} \lnorm{u_j} -
\inner{\xi_k}{u_j} = \dfrac{1}{2} \lnorm{u_j - \xi_k} - \dfrac{1}{2}
\lnorm{\xi_k} $, we obtain $\snorma{u_j} \leq \tilde{M}_k $, where
$\tilde{M}_k \geq 0$ depends on the current iterates. Thus, the existence
of a minimizer follows from compactness arguments.

It remains to prove that $\xi_{k+1} \in \sg{u_{k+1}}$. This result follows from
the inequality $\phi_2 (v) \geq \phi_2(u_{k+1}) +
\inner{-\phi'_1(u_{k+1})}{v-u_{k+1}}$, where $\phi_1(u) = \dfrac{1}{2}
\lnorm{\yd - \F{u} }$ and $\phi_2(u) = \a_k \db{\xi_k}{u}{u_k}$ (see
\cite{Bachmayr2007, Bleyer2008} for details).
\end{proof}

\subsection{Main results}

The main results of this section give sufficient conditions to guarantee
existence of a convergence subsequence in algorithm \ref{alg:iterative},
(for both exact and noisy data). In particular, for noisy data, we introduce
a stopping rule based on the discrepancy principle. For a complete proof we
refer the reader to \cite{Bleyer2008}.

\begin{theorem}[Convergence] \label{theo:nlic}
Let the assumption \ref{assum:3} hold, $\g < \min \left\lbrace \frac{1}{\eta},
\frac{\rho}{2} \right\rbrace$ for $\eta$, $\rho$ as in \eqref{eq:nlinear},
$0 < \a_k < \bar{\a}$, $\h{\ub} < \infty $. Moreover, assume that the starting
values $u_0$, $\xi_0 \in \L$ satisfy $\db{\xi_0}{\ub}{u_0} < \frac{\g^2}{8}$
for some $\ub \in \S{y}$. Then, for exact data, the sequence $( u_k )$ has a
subsequence converging to some $u \in \S{y}$ in the weak-$*$ topology of
$\bv$. Moreover, if $\S{y} \cap \overline{ \bola{\rho}{\ub} } = \left\lbrace
\ub\right\rbrace$, then $u_k \xrightharpoonup{*} \ub$ in $\bv$.
\end{theorem}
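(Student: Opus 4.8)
The plan is to exploit the monotonicity estimate \eqref{eq:3ptos} from Lemma \ref{lemma:nli2} to control the iterates, feeding in Lemma \ref{lemma:nli3} to convert Bregman-distance bounds into $\L$-norm bounds and hence into $\bv$-bounds via the compactness built into \eqref{funcional}. First I would verify, by induction on $k$, that all iterates remain in $\bola{\rho}{\ub}$, so that the nonlinearity condition \eqref{eq:nlinear} (and therefore the hypothesis of Lemma \ref{lemma:nli2}) is legitimately available at every step. The base case is the assumption $\db{\xi_0}{\ub}{u_0} < \g^2/8$; using Lemma \ref{lemma:nli3} this gives $\tfrac12 \lnorm{\ub - u_0} \le \g^2/8$, i.e. $\lnorma{u_0 - \ub} \le \g/2 < \rho$. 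For the inductive step, note that \eqref{eq:3ptos} with exact data ($\yd = y$) gives $\db{\xi_{k+1}}{\ub}{u_{k+1}} \le \db{\xi_k}{\ub}{u_k}$, so the Bregman distances to $\ub$ are nonincreasing; combining with Lemma \ref{lemma:nli3} again yields $\lnorma{u_{k+1} - \ub} \le \g/2 < \rho$, closing the induction and also showing that the discrepancy hypothesis of Lemma \ref{lemma:nli2} is self-consistent with the choice $\g < 1/\eta$ (so that the constant $c = \eta \lnorma{\ub - u_{k+1}}$ appearing there is $< 1$).

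Next I would sum the telescoping inequality \eqref{eq:3ptos} over $k = 0, \dots, N-1$ to obtain
\begin{equation*}
\sum_{k=0}^{N-1} \db{\xi_k}{u_{k+1}}{u_k} + \frac{1-c}{\bar\a} \sum_{k=0}^{N-1} \norm{y - \F{u_{k+1}}} \le \db{\xi_0}{\ub}{u_0} < \frac{\g^2}{8} \, .
\end{equation*}
This shows $\sum_k \db{\xi_k}{u_{k+1}}{u_k} < \infty$ and $\sum_k \norm{y - \F{u_{k+1}}} < \infty$; in particular $\norma{y - \F{u_k}} \to 0$. By Lemma \ref{lemma:nli3} the first sum controls $\sum_k \lnorm{u_{k+1} - u_k} < \infty$. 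To extract a convergent subsequence in the weak-$*$ topology of $\bv$ I would show the full sequence is bounded in $\bv$: the $\L$-part is bounded since $u_k \in \bola{\rho}{\ub}$, and for the $\snorma{\cdot}$-part I would use that $\h{u_k}$ is essentially nonincreasing (or at least bounded) along the iteration — this follows from Lemma \ref{lemma:nli1} together with the defining minimality of $u_{k+1}$ in \eqref{alg1} and the definition of the Bregman distance, giving $\h{u_{k+1}} \le \h{u_k} + \inner{\xi_k}{u_{k+1} - u_k}$, which one bounds using the update rule \eqref{alg2mod} and the summability just established. Bounded sequences in $\bv$ are weak-$*$ precompact, so a subsequence $u_{k_j} \xrightharpoonup{*} u$ in $\bv$, hence $u_{k_j} \to u$ in $\L$ (by compact embedding on the bounded Lipschitz domain $\Omega$).

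It remains to identify the limit: since $\norma{y - \F{u_{k_j}}} \to 0$ and $\FF$ is weakly sequentially closed (Assumption \ref{assum:3}) — with $u_{k_j}$ converging weakly in $\L$ and $\F{u_{k_j}} \to y$ strongly — we get $u \in \dom{\FF}$ and $\F{u} = y$, i.e. $u \in \S{y}$. For the final uniqueness statement, if $\S{y} \cap \overline{\bola{\rho}{\ub}} = \{\ub\}$, then since every subsequential weak-$*$ limit lies in this set (all iterates stay in $\overline{\bola{\rho}{\ub}}$ by the induction above, and $\bv$ weak-$*$ limits of $\L$-convergent sequences stay in the closed ball), every subsequential limit equals $\ub$; a standard subsequence-of-subsequence argument then upgrades this to $u_k \xrightharpoonup{*} \ub$ for the full sequence.

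\textbf{Main obstacle.} I expect the delicate point to be the uniform $\bv$-bound on the iterates — establishing that $\snorma{u_k}$ stays bounded. The monotonicity lemmas directly control $\db{\xi_k}{\ub}{u_k}$ and the residuals, but the Bregman distance with the $\L^2 + BV$ functional only directly dominates the $\L^2$-part (Lemma \ref{lemma:nli3}); recovering control of the total-variation seminorm requires carefully tracking $\h{u_k}$ and the subgradients $\xi_k$ through the update \eqref{alg2mod}, using that $\xi_k - \xi_0$ is a telescoped sum of terms $\dF{u_{j+1}}^*(\F{u_{j+1}} - \yd)$ whose norms are controlled by local boundedness of $\dF{\cdot}$ times the summable residuals. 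Handling the pairing $\inner{\xi_k}{u_{k+1} - u_k}$ cleanly — keeping it summable — is where the real work lies, and is presumably the reason the authors defer the full argument to \cite{Bleyer2008}.
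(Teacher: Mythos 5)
Your overall architecture (induction to keep the iterates near $\ub$, telescoping \eqref{eq:3ptos}, a uniform bound on $\h{u_k}$, then Banach--Alaoglu plus weak sequential closedness) is the same as the paper's, but two steps have genuine gaps. The first is that your induction is circular: to invoke Lemma \ref{lemma:nli2} at index $k$ you must verify its hypothesis $\norma{y - F(u_{k+1}) - F'(u_{k+1})(\ub - u_{k+1})} \leq c \norma{y - F(u_{k+1})}$, and the only source for this is the nonlinearity condition \eqref{eq:nlinear}, which is available only for points of $\bola{\rho}{\ub}$; so you need $u_{k+1} \in \bola{\rho}{\ub}$ \emph{before} you may use \eqref{eq:3ptos}, whereas you use \eqref{eq:3ptos} precisely to prove that $u_{k+1}$ stays in the ball ("self-consistent" is not an argument). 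The paper closes the loop differently: by minimality of $u_{k+1}$ in \eqref{alg1} with exact data, $\tfrac{1}{2}\norm{F(u_{k+1}) - y} + \a_k \db{\xi_k}{u_{k+1}}{u_k} \leq \a_k \db{\xi_k}{\ub}{u_k} < \a_k \g^2/8$, so Lemma \ref{lemma:nli3} applied twice gives $\lnorma{u_{k+1}-u_k} < \g/2$ and $\lnorma{u_k - \ub} < \g/2$, hence $\lnorma{u_{k+1}-\ub} < \g < \rho$; only then is Lemma \ref{lemma:nli2} applicable (with $c = \eta\g < 1$), which restores $\db{\xi_{k+1}}{\ub}{u_{k+1}} < \g^2/8$ and closes the induction.

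The second gap is the uniform bound on $\h{u_k}$ (hence on $\snorma{u_k}$), which you yourself flag as unresolved; the route you sketch is unlikely to work. Your increment inequality $\h{u_{k+1}} \leq \h{u_k} + \inner{\xi_k}{u_{k+1}-u_k}$ drops the residual terms coming from minimality, and, more seriously, summing increments would require $\sum_k \lvert \inner{\xi_k}{u_{k+1}-u_k} \rvert < \infty$, while the telescoped estimate only yields $\sum_k \lnorm{u_{k+1}-u_k} < \infty$ (squares), not summability of $\lnorma{u_{k+1}-u_k}$. Moreover your Step-2 display discards the weights $1/\a_i$ (replacing them by $1/\bar{\a}$); since no lower bound on $\a_k$ is assumed in this theorem, the residuals themselves are not known to be summable, so "local boundedness of $F'$ times summable residuals" does not bound $\xi_k - \xi_0$. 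The paper's Step 3 avoids all of this: from $\db{\xi_k}{\ub}{u_k} \geq 0$ one gets $\h{u_k} \leq \h{\ub} - \inner{\xi_k}{\ub - u_k}$; inserting the telescoped subgradient of Remark \ref{remark:update} and estimating $\norma{F'(u_{i+1})(\ub - u_k)} \leq (3 + 5\eta\g)\norma{F(u_{i+1}) - y}$ via Remark \ref{remark:assumption} and Lemma \ref{lemma:nli1}, each summand becomes a constant times $\tfrac{1}{\a_i}\norm{F(u_{i+1})-y}$, which is exactly the series (with the $1/\a_i$ weights) proved finite in Step 2. Finally, the compact embedding $\bv \hookrightarrow \L$ you invoke to upgrade to strong $\L$ convergence is unnecessary (and fails for $n > 2$); the paper identifies the limit directly through weak sequential closedness / the closed graph theorem.
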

\begin{proof}
\textbf{Step 1:} First we rewrite the assumption in the form
$2 \sqrt{2 \db{\xi_0}{\ub}{u_0} } < \g $. Assuming that the same condition
holds for a pair of iterates $u_k$, $\xi_k$ we proof by induction that it
also holds for the index $k+1$.

Let $u_{k+1}$ be the minimizer of $\J{\a_k}{\cdot}$, so $\J{\a_k}{u_{k+1}}
\leq \J{\a_k}{\ub}$. Thus we rewrite the inequality, then apply lemma
\ref{lemma:nli3} twice, and conclude that $\lnorma{u_{k+1} - \ub} < \g$.
Hence, assumption \ref{assum:3} is satisfied and the lemma \ref{lemma:nli2}
hold for all iterates.

\noindent
\textbf{Step 2:} In this step we proof that $\sum_{i=0}^{\infty} \dfrac{1}{\a_i}
\norm{y - \F{u_{i+1}} } < \infty$. \\
As in the previous step, by the lemma \ref{lemma:nli2} the inequality
\eqref{eq:3ptos} holds for every $k$. So we can sum up until $k$, for
some $k \in \N$. After that, we cancel the equal terms, apply the
assumption on starting values on the right hand side, and obtain
$$
\db{\xi_{k}}{\ub}{u_{k}} + \sum_{i=0}^{k-1} \db{\xi_{i}}{ u_{i+1} }{ u_i } +
\sum_{i=0}^{k-1} \dfrac{1 - \eta \g}{\a_i}\norm{y - \F{ u_{i+1} } } \leq
\dfrac{\g^2}{8} \, .
$$
Since all terms on the left hand side are positive, step 2 follows from
the third term taking the limit as $k$ tends to infinity. Note that this
series is convergent, by the convergence criterion for series follows
$\F{u_{k}} \rightarrow y$.

\noindent
\textbf{Step 3:} We show the uniform limitation of the sequence
$\left( \h{u_k} \right)$. Applying the Bregman distance (it is always
grater than zero) we have $\h{u_k} \leq \h{\ub} - \inner{\xi_k}{\ub - u_k}$.
Thus, by remark \ref{remark:update}, $\lnorma{u_{k+1} - \ub} < \g$ and the
Cauchy-Schwarz inequality, we obtain
$$
\h{u_k} \leq \h{\ub} + \g \lnorma{ \xi_0 } +
\sum_{i=0}^{k-1} \dfrac{1}{\a_i} \norma{ \F{u_{i+1}} - y }
\norma{ \dF{u_{i+1}} \left( \ub - u_k\right) } \, .
$$
In order to estimate the term inside the sum, note that for
$0 \leq i \leq k-1$, the estimate $ \norma{ \dF{u_{i+1}}
\left( \ub - u_k\right) } \leq \norma{ \dF{u_{i+1}}
\left( \ub - u_{i+1} \right) } + \norma{ \dF{u_{i+1}}
\left( u_k - u_{i+1} \right) } $ holds. Now, using remark
\ref{remark:assumption} twice, we find the bound
$\left( 3 + 5 \eta \g \right) \norma{ \F{u_{i+1}} - y}$ for the previous
estimate. Substituting this estimate in the sum above and using step 2,
the desired boundedness of the sequence $\left( \h{u_k} \right)$ follows.

\textbf{Step 4:} We know that $\lvert \h{u_k} \rvert = \h{u_k} \leq N $,
for some $N > 0$ (see \eqref{funcional}). The remaining assertions of the
theorem follow from standard compactness results (Banach-Alaoglu theorem).
We use the closed graph theorem to ensure that the limit of the obtained
sequence belongs to $\S{y}$.
\end{proof}

In the case of noisy data we use a generalized discrepancy principle as
stopping rule. The stopping index is defined as the smallest integer $k^*$
satisfying
\begin{equation}\label{eq:stop}
\| \F{u_{k^*}} - \yd \|  \leq  \tau \delta
\end{equation}
where $\tau > 1$ still has to be chosen.

\begin{theorem}[Stability] \label{theo:nlis}
Let assumption \ref{assum:3} hold, $\g < \min \left\lbrace \frac{1}{\eta} ,
\frac{\rho}{2} \right\rbrace $ for $\eta$ and $\rho$ as in \eqref{eq:nlinear},
$0 < \underline{\a} \leq \a_k \leq \overline{\a}$, $\h{\ub} < \infty $ and
the starting values $u_0$, $\xi_0 \in \L$ satisfy $\db{\xi_0}{\ub}{u_0} <
\frac{\g^2}{8}$ for an $\ub \in \S{y}$. Moreover, let $\delta_m > 0$ be a
sequence such that $\delta_m \rightarrow 0$, and let the corresponding
stopping indices $k_m^*$ be chosen according to \eqref{eq:stop} with
$\tau > (1 + \eta \g) / (1 - \eta \g)$.
Then for every $\delta_m$ the stopping index is finite and the sequence
$\left(  u_{k_m^*} \right) $ has a subsequence converging to an $u \in \S{y}$
in the weak-$*$ topology of $\bv$. Moreover, if $\S{y} \cap
\overline{ \bola{\rho}{\ub} } = \left\lbrace \ub \right\rbrace $, then
$u_{k_m^*} \xrightharpoonup{*} \ub$ in $\bv$.
\end{theorem}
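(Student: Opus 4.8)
The plan is to mirror the structure of the proof of Theorem~\ref{theo:nlic} (the exact-data case), adapting each step so that the noise level $\delta_m$ enters through the discrepancy stopping rule \eqref{eq:stop}. The key point is that, as long as $k < k_m^*$, the residual satisfies $\norma{\F{u_k} - \yd} > \tau\delta_m$, and this lower bound is what makes the telescoped sum of Lemma~\ref{lemma:nli2} decrease; once $k = k_m^*$, the residual is $\le \tau\delta_m$ and we stop. So the strategy is: first establish an a~priori bound on $\db{\xi_k}{\ub}{u_k}$ valid for all $k \le k_m^*$ (uniformly in $m$), then show $k_m^*$ is finite, then extract a convergent subsequence of $(u_{k_m^*})$, and finally identify its limit as an element of $\S{y}$.

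First I would redo \textbf{Step~1} of Theorem~\ref{theo:nlic} with $\yd$ in place of $y$: by induction on $k \le k_m^*$, assuming $2\sqrt{2\,\db{\xi_k}{\ub}{u_k}} < \g$, use that $u_{k+1}$ minimizes $\Jd{\a_k}{\cdot}$ so $\Jd{\a_k}{u_{k+1}} \le \Jd{\a_k}{\ub} \le \tfrac12\delta_m^2 + \a_k\,\db{\xi_k}{\ub}{u_k}$ (here one uses $\norma{\F{\ub}-\yd} \le \delta_m$ from \eqref{eq:ruido} and $\ub \in \S{y}$), then invoke Lemma~\ref{lemma:nli3} twice to get $\tfrac12\lnorm{u_{k+1}-\ub}$ controlled, so that $\lnorma{u_{k+1}-\ub} < \g$ — this is where the hypothesis $\g < \rho/2$ and the smallness of $\db{\xi_0}{\ub}{u_0}$ are consumed, and where one needs $\delta_m$ small enough relative to $\g$; since $\delta_m \to 0$ this holds for all large $m$, and for the finitely many small $m$ one can argue separately or simply note the stopping rule forces $k_m^* = 0$. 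Then assumption~\ref{assum:3} applies along the whole finite orbit $u_0,\dots,u_{k_m^*}$, so Lemma~\ref{lemma:nli2} is available.

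Next, \textbf{Step~2}: sum the three-point inequality \eqref{eq:3ptos} from $i=0$ to $k_m^*-1$. After cancellation this gives
$$
\db{\xi_{k_m^*}}{\ub}{u_{k_m^*}} + \sum_{i=0}^{k_m^*-1} \db{\xi_i}{u_{i+1}}{u_i}
+ \sum_{i=0}^{k_m^*-1} \frac{1-c}{\a_i}\norm{\yd - \F{u_{i+1}}} \le \frac{\g^2}{8},
$$
where now $c$ must be taken so that the nonlinearity estimate in Lemma~\ref{lemma:nli2} holds; combining \eqref{eq:nlinear} with $\lnorma{u_{i+1}-\ub}<\g$ gives $c = \eta\g$, and the hypothesis $\tau > (1+\eta\g)/(1-\eta\g)$ is exactly what guarantees that for $i < k_m^*$ the "extra" noise term $-\hinner{\cdot}{\yd-y}$ appearing when one expands $\inner{\xi_{i+1}-\xi_i}{u_{i+1}-\ub}$ is dominated, so that the net coefficient of $\norm{\yd-\F{u_{i+1}}}$ stays positive; this is the technical heart of the stability half and the step I expect to be the main obstacle, since it requires carefully tracking how $\tau\delta_m < \norma{\F{u_{i+1}}-\yd}$ feeds back into the three-point inequality. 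From the displayed bound one reads off both that $(\db{\xi_{k_m^*}}{\ub}{u_{k_m^*}})_m$ is bounded by $\g^2/8$ and that $\sum_{i<k_m^*}\a_i^{-1}\norm{\yd-\F{u_{i+1}}}$ is bounded; since $\a_i \le \overline{\a}$ and each term with $i<k_m^*$ has $\norm{\yd-\F{u_{i+1}}} > \tau^2\delta_m^2$, the number of such terms is at most $\overline{\a}\g^2/(8(1-\eta\g)\tau^2\delta_m^2)$, proving $k_m^*<\infty$.

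Finally, \textbf{Step~3} and \textbf{Step~4}: exactly as in Theorem~\ref{theo:nlic}, bound $\h{u_{k_m^*}}$ uniformly in $m$ using $\h{u_{k_m^*}} \le \h{\ub} - \inner{\xi_{k_m^*}}{\ub-u_{k_m^*}}$, the update formula \eqref{alg2mod}, $\lnorma{u_{i+1}-\ub}<\g$, Cauchy--Schwarz, and Remark~\ref{remark:assumption} (giving the factor $3+5\eta\g$ on $\norma{\F{u_{i+1}}-\yd}$), together with the Step~2 bound on the residual sum; by \eqref{funcional} this yields $\bvnorma{u_{k_m^*}} \le N$ uniformly, so Banach--Alaoglu gives a weak-$*$ convergent subsequence $u_{k_m^*} \xrightharpoonup{*} u$ in $\bv$. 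Since $\norma{\F{u_{k_m^*}}-\yd} \le \tau\delta_m \to 0$ we get $\F{u_{k_m^*}} \to y$ in $\H$, and the weak sequential closedness of $\FF$ (assumption~\ref{assum:3}) forces $u \in \S{y}$. The last assertion, under $\S{y}\cap\overline{\bola{\rho}{\ub}} = \{\ub\}$, follows because every subsequential weak-$*$ limit lies in this singleton (all iterates stay in $\bola{\g}{\ub}\subset\bola{\rho}{\ub}$ by Step~1 and $\bv \hookrightarrow \L$ compactly makes the limit lie in the closed ball), so the whole sequence converges to $\ub$.
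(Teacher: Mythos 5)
Your proposal follows the same route as the paper: an induction keeping the iterates close to $\ub$ so that Lemma \ref{lemma:nli2} applies, telescoping \eqref{eq:3ptos} against the discrepancy lower bound to bound the stopping index, then a uniform bound on $\h{u_{k_m^*}}$, Banach--Alaoglu, $\norma{F(u_{k_m^*})-\yd}\le\tau\delta_m\to 0$, and weak sequential closedness to identify the limit. However, the two points you leave open are precisely where the noise has to be absorbed, and one of them, as written, fails. The constant in Lemma \ref{lemma:nli2} is not $c=\eta\g$: the lemma's hypothesis is formulated for the noisy residual, and verifying it for $k<k_m^*$ from \eqref{eq:nlinear} requires $\norma{F(u_{k+1})-y}\le\norma{F(u_{k+1})-\yd}+\delta_m$ together with the discrepancy bound $\delta_m<\tau^{-1}\norma{F(u_{k+1})-\yd}$, which yields $c=\eta\g+(1+\eta\g)/\tau$; the hypothesis $\tau>(1+\eta\g)/(1-\eta\g)$ is exactly the condition $c<1$. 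You flag this as the ``main obstacle'' but do not carry it out, so your Step 2 coefficient $1-\eta\g$ (and hence your count of the terms bounding $k_m^*$) is not established as stated.

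The more serious gap is the treatment of the final iterate. You telescope \eqref{eq:3ptos} up to $i=k_m^*-1$ and claim that all iterates, including $u_{k_m^*}$, lie in $\bola{\g}{\ub}$ and satisfy the three-point estimate. For that last step the discrepancy lower bound is exactly what fails, since $\norma{F(u_{k_m^*})-\yd}\le\tau\delta_m$ by \eqref{eq:stop}; the noise term can then no longer be absorbed into a constant $c<1$, Lemma \ref{lemma:nli2} is unavailable, and neither $\db{\xi_{k_m^*}}{\ub}{u_{k_m^*}}\le\g^2/8$ nor $\lnorma{u_{k_m^*}-\ub}<\g$ follows. The paper sums only over $i\le k_m^*-2$ and treats $u_{k_m^*}$ separately: $\lnorma{\ub-u_{k_m^*}}<\rho$ is obtained from the minimizer property $\Jd{\a_{k_m^*-1}}{u_{k_m^*}}\le\Jd{\a_{k_m^*-1}}{\ub}$ plus Lemma \ref{lemma:nli3}, using smallness of the noise ($\delta<\sqrt{3\g^2\underline{\a}/4}$), and the corresponding final term in the estimate of $\h{u_{k_m^*}}$ is bounded directly with the stopping bound $\tau\delta_m$, which is the origin of the extra term $\tau\delta^2(1+\eta\rho)(1+\tau)/\underline{\a}$ and of constants involving $\rho$ rather than $\g$; your factor $3+5\eta\g$ tacitly assumes $\lnorma{u_{k_m^*}-\ub}<\g$, which you do not have. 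Finally, the fallback that for the remaining noise levels ``the stopping rule forces $k_m^*=0$'' is unjustified: $k_m^*=0$ only when $\norma{F(u_0)-\yd}\le\tau\delta_m$, which holds for $\delta_m$ large but not for intermediate values, so the assertion that the stopping index is finite for every $\delta_m$ still needs the argument above.
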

\begin{proof}
\textbf{Step 1:} This step is analogous to step 1 in the proof of theorem
\ref{theo:nlic}.
For each $k$ such that $k < k^* - 1$, we have $\norma{ \F{u_k} - \yd } >
\tau \delta$. By induction one can prove that $\lnorma{ u_{k+1} - \ub } < \g$,
and that the nonlinear condition \eqref{eq:3ptos} holds. Therefore, lemma
\ref{lemma:nli2} holds for $c = \frac{1}{\tau} \left( 1 + \eta \g \right) +
\eta \g$. \\
\textbf{Step 2:} We show that the stopping index $k^*$ is finite. Analogous
to step 2 in the proof of theorem \ref{theo:nlic}, we sum up the first
$k^* - 1$ terms of \eqref{eq:3ptos}, obtaining
\begin{equation}\label{temp4}
\sum_{i=0}^{k^*-2} \dfrac{1}{\a_i}\norm{\yd - \F{ u_{i+1} } } <
\dfrac{\g^2}{8 \left( 1 - c \right)} \, .
\end{equation}
Since for every $k < k^* - 1$ the inequality $\norma{ \F{u_k} - \yd } >
\tau \delta$ holds, we use this inequality on the left hand side of the above
estimate and conclude that
$$
k^* < \left( \dfrac{\g}{\tau \delta} \right)^2 \dfrac{ \overline{\a} }
{8 \left( 1 - c \right) } + 1 \, .
$$
\textbf{Step 3:} In order to prove the convergence of the series in
\eqref{temp4}, notice that the right hand side of \eqref{temp4} does not
depend on $k^*$. \\
\textbf{Step 4:} Analogous to step 3 in the proof of theorem \ref{theo:nlic},
we use the Bregman distance, and remark \eqref{remark:update} to conclude that
\begin{eqnarray*}
\h{u_{k^*}} & \leq & \h{\ub} + \left| \inner{ \xi_0}{\ub - u_{k^*}}\right| \\
& & +  \sum_{i=0}^{k^*-2} \dfrac{1}{\a_i} \left| \hinner{\F{u_{i+1}} - \yd}
{\dF{u_{i+1}} \left( \ub - u_{k^*} \right) } \right| \nonumber \\
& & \ + \dfrac{1}{\a_{k^*-1}} \left| \hinner{\F{u_{k^*}} - \yd}{\dF{u_{k^* }}
\left( \ub - u_{k^*} \right) } \right| \, .
\end{eqnarray*}
In the sequel we estimate the three terms on the right hand side of this
inequality. For the first of them we have
$\lnorma{\ub - u_{k^*}} < \rho $. Indeed, on step $k^*-1 $ we have $u_{k^*}$
as minimizer of $\Jd{\a_k}{\cdot}$, thus $\Jd{\a_k}{u_{k^*}} \leq
\Jd{\a_k}{\ub}$. Rearranging the terms and discarding some positive terms,
it follows that $\db{\xi_{k^*-1}}{u_{k^*}}{u_{k^*-1}} <
\dfrac{\delta^2 }{2 \a_{k^*-1}} + \dfrac{\g^2}{8}$. Finally, we apply lemma
\ref{lemma:nli3} with $\delta < \overline{\delta} =
\sqrt{ 3/4 \g^2 \underline{\a} }$. \\
To estimate the last two terms we use Cauchy-Schwarz, assumption
\ref{assum:3}, lemma \ref{lemma:nli1} and \ref{lemma:nli2}, remark
\ref{remark:assumption} together with steps 1, 2 and 3 above. Summarizing,
we obtain
\begin{eqnarray*}
\h{u_{k^*}} & < & \h{\ub} + \rho \lnorma{\xi_0} +
\dfrac{\left( c + 3 + 4 \eta \rho  \right)} { \left( 1 - c \right)}
\dfrac{\g^2}{8} + \dfrac{ \tau \delta^2 \left( 1 + \eta \rho \right)
\left(  1 + \tau \right) }{ \underline{\a} } \, .
\end{eqnarray*}
\textbf{Step 5:} This step is very similar to step 4 in the proof of
theorem \ref{theo:nlic}. We just need to show that
$\F{u_{k^*_m}} \rightarrow y$. This convergence follows from the estimate
\begin{eqnarray*}
\norma{\F{u_{k^*_m}} - y} & \leq & \norma{\F{u_{k^*_m}} - y^{\delta_m} } +
\norma{y^{\delta_m} - y} \nonumber \\
& \leq & \left( 1 + \tau \right) \delta_m
\end{eqnarray*}
when $\delta_m$ goes to zero.
\end{proof}

\appendix
\begin{appendix}

\section{Definitions} \label{sec:app}

\begin{definition}
Given $\h{\cdot}$ a convex functional, one can define the Bregman distance with respect to $\hh$ between the elements $v, u \in \domf$ as
\begin{equation*}
\db{}{v}{u} =  \left\lbrace \db{\xi}{v}{u} \mid \xi \in \sg{u} \right\rbrace \ ,
\end{equation*}where $\sg{u}$ denotes the subdifferential of $\hh$ at $u$ and
\begin{equation*}
\db{\xi}{v}{u} = \h{v} - \h{u} - \inner{\xi}{v-u} \ .
\end{equation*} 
\end{definition}

We remark that $\inner{}{}$ denotes the standard dual pairing (duality product) with respect to $\U^* \times \U$.

Another important definition is the generalized solution, we introduce the notion of the \textit{$\hh$-minimizing solution} bellow.

\begin{definition}\label{def_sol_min}
An element $\ub \in \domf \cap \dom{\FF}$ is called an $\hh$-minimizing solution of \eqref{eq:princ} if it minimizes the functional $\hh$ among every possible solutions, that is,
\begin{equation*}
\ub = \argmin \left\lbrace \h{u} \tq \F{u} = y \right\rbrace \ .
\end{equation*}
\end{definition}

Whenever we need, we can choose the least-square solution instead the
standard solution $\F{u} = y$.

\end{appendix}

\section*{Acknowledgments}
The work of A.L. is supported by the Brazilian National Research
Council CNPq, grants 306020/2006--8, 474593/2007--0, and by the
Alexander von Humbolt Foundation AvH.


\bibliography{references}

\end{document}